\newcommand{\longsurj}{%
\relbar\joinrel\twoheadrightarrow
}
\newcommand{\N}{\mathbb{N}}
\newcommand{\R}{\mathbb{R}}
\renewcommand{\R}{\mathbb{R}}
\renewcommand{\leq}{\leqslant}
\renewcommand{\geq}{\geqslant}
\newcommand{\seq}{\subseteq}
\newcommand{\df}{\coloneqq}
\newcommand{\C}{{\sf C}}
\renewcommand{\P}{\mathscr{P}}
\newcommand{\HAf}{{\sf HA}_{\sf f}}
\newcommand{\Posf}{{\sf Pos}_{\sf f}}
\DeclareMathOperator{\Log}{{\sf Log}}
\newcommand{\BD}{\text{\sc bd}}
\newcommand{\gen}{\reflectbox{$\neg$}}
\DeclareMathOperator{\Op}{\mathscr{O}}
\DeclareMathOperator{\K}{\mathscr{C}}
\DeclareMathOperator{\Subc}{\mathrm{Sub_c}}
\DeclareMathOperator{\Subo}{\mathrm{Sub_o}}
\DeclareMathOperator{\conv}{\rm conv}
\DeclareMathOperator{\inte}{\rm int}
\DeclareMathOperator{\depth}{\rm dep}
\DeclareMathOperator{\Up}{\rm Up}
\DeclareMathOperator{\Lo}{\rm Lo}
\DeclareMathOperator{\Spec}{\rm Spec}
\newcommand{\gammaup}{\gamma^{\uparrow}}
\newcommand{\face}{\preccurlyeq}
\newcommand{\faceneq}{\prec}
\DeclareMathOperator{\cl}{\mathrm{cl}}
\DeclareMathOperator{\rint}{\mathrm{relint}}
\DeclareMathOperator{\PCc}{{\sf P}_{c}} 
\DeclareMathOperator{\PCo}{{\sf P}_{o}} 
\newcommand{\ostar}[1]{\mathrm{o}(#1)}
\newcommand{\Polyd}{{\sf P}_{d}}
\DeclareMathOperator{\nerve}{{\mathscr{N}}}
\DeclareMathOperator{\geo}{{\nabla}}
\newtheorem{thm}{Theorem}[section]
\newtheorem{lem}[thm]{Lemma}
\newtheorem{cor}[thm]{Corollary}
\theoremstyle{definition}
\newtheorem{defn}[thm]{Definition}
\newtheorem*{construction}{Construction}
\theoremstyle{remark}
\newtheorem{rem}[thm]{Remark}
\theoremstyle{remark}
\newtheorem{ex}[thm]{Example}
\theoremstyle{theorem}
\newtheorem*{mainthm}{Theorem}
\title[Tarski's Theorem and Polyhedra]{Tarski's Theorem on Intuitionistic Logic, for Polyhedra}
\author[N. Bezhanishvili]{Nick Bezhanishvili}
\author[V. Marra]{Vincenzo Marra}
\author[D. McNeill]{Daniel McNeill}
\author[A. Pedrini]{Andrea Pedrini}
\address[N. Bezhanishvili]{Institute for Logic, Language and Computation,
University of Amsterdam, P.O. Box 94242, 1090 GE Amsterdam, The Netherlands}
\address[V. Marra]{Dipartimento di Matematica {\sl Federigo Enriques}, Universit\`a degli Studi di Milano, via Cesare Saldini 50, 20133 Milano, Italy.}
\address[A. Pedrini]{Dipartimento di Matematica {\sl Felice Casorati}, Universit\`a degli Studi di Pavia, Via Ferrata 5, 27100 Pavia, Italy.}
\address[D. McNeill]{Dipartimento di Scienze Teoriche e Applicate, Universit\`a degli Studi dell'Insubria, Via Mazzini 5, 21100 Varese, Italy.}
\email[N. Bezhanishvili]{n.bezhanishvili@uva.nl}
\email[V. Marra]{vincenzo.marra@unimi.it}
\email[D. McNeill]{danmcne@gmail.com}
\email[A. Pedrini]{andrea.pedrini@unipv.it}
\thanks{2010 {\it Mathematics Subject Classification.
}
Primary: 03B20. Secondary: 06D20; 06D22; 55U10; 52B70; 57Q99.}
\keywords{Intuitionistic logic; topological semantics; completeness theorem; finite model property; Heyting algebra; locally finite algebra; polyhedron; simplicial comple; triangulation; PL topology.}
\begin{document}
\maketitle

\begin{abstract}In 1938, Tarski proved that a formula is not intuitionistically valid if, and only if, it has a counter-model in the Heyting algebra of open sets of some topological space. In fact, Tarski showed that any Euclidean space $\R^{n}$ with $n\geq 1$ suffices, as does e.g.\  the Cantor space. In particular, intuitionistic logic cannot detect topological dimension in the frame of all open sets of a Euclidean space. By contrast, we consider the lattice of open subpolyhedra of a given compact polyhedron $P\seq \R^{n}$, prove that it is a locally finite Heyting subalgebra of the (non-locally-finite) algebra of all open sets of $\R^{n}$, and show that intuitionistic logic is able to capture the topological dimension of $P$ through the bounded-depth axiom schemata. Further, we show that intuitionistic logic is precisely the logic of formul\ae\ valid in all Heyting algebras arising from polyhedra in this manner. Thus, our main theorem  reconciles through polyhedral geometry two classical results:  topological  completeness in the style of Tarski, and Ja\'skowski's theorem that intuitionistic logic enjoys the finite model property. Several questions of interest remain open. E.g., what is the intermediate logic of all closed triangulable manifolds?
\end{abstract}

\section{Introduction}\label{s:intro}
If $X$ is any topological space, the collection $\Op{(X)}$ of its open subsets is a (complete) Heyting algebra whose underlying order is given by set-theoretic inclusion. One can then interpret formul\ae\ of intuitionistic logic into $\Op{(X)}$ by  assigning open sets to propositional atoms, and then extending the assignment to formul\ae\ using the  operations of the Heyting algebra $\Op{(X)}$.  A formula is true under such an interpretation just when it evaluates to $X$. In 1938, Tarski (\cite{Tarski38}, English translation in \cite{LSM2nd}) proved that intuitionistic logic is complete with respect to this semantics. Moreover, Tarski showed that one can considerably restrict the class $\C$ of spaces under consideration without impairing completeness. In particular,  one can take $\C\df\{X\mid \text{ $X$ is metrisable}\}$, and even $\C\df\{\R\}$ or $\C\df\{2^{\N}\}$, where $2^{\N}$ denotes the Cantor space. Tarski's result opened up a research area that continues to prosper to this day. Immediate descendants of \cite{Tarski38} are the three seminal papers \cite{McKinseyTarski44, McKinseyTarski46, McKinseyTarski48} by McKinsey and Tarski; \cite[\S 3]{McKinseyTarski46} offers a different proof of the main result of \cite{Tarski38} in the dual language of closed sets and co-Heyting  algebras.
For an exposition of the different themes in spatial logic we refer to \cite{APvB07}.

Intuitionistic logic has the finite model property. In  1936 Ja\'skowski sketched a proof of this fact  \cite{Jaskowski36};  the first detailed exposition of the result\footnote{Though not exactly of the proof sketched by Ja\'skowski: cf.\ \cite[Lemma 5.3 and footnote (16)]{Rose53}.} seems to be \cite[Theorem 5.4]{Rose53} (see also \cite[Theorem 2.57]{CZ97}).
Algebraically, the finite model property may be rephrased into the statement that there exists a set of finite Heyting algebras that generates the equational class (or \emph{variety}) of all Heyting algebras. An algebraic proof of this result was first obtained by McKinsey and Tarski \cite{McK41, McKinseyTarski44} (see \cite{BB16} for a discussion 
of this  proof and  a comparison with the model-theoretic method of filtration). Ja\'skowski's proof shows that, in fact, there is a countable, recursively enumerable\footnote{Each finite Heyting algebra being presented, e.g., by the finite multiplication tables for its operations. Ja\'skowski's theorem yields at once the decidability of intuitionistic logic. 
More is  known: the problem of deciding whether a formula is intuitionistically provable is {\sc pspace}-complete \cite{Statman79}.} such set.

Recall that an algebraic structure is \emph{locally finite} if its finitely generated substructures are finite. The Heyting algebras $\Op{(\R)}$ and $\Op{(2^{\N})}$ are very far from being locally finite. For example, \cite[Theorem 3.33]{McKinseyTarski46} shows that any Heyting algebra freely generated by a finite set embeds into both $\Op{(\R)}$ and $\Op{(2^{\N})}$, and  already the Heyting algebra freely generated by one element (the Rieger-Nishimura lattice \cite{Rieger49, Nishimura62}) is of course infinite. Thus, while counter-models to formul\ae\ that are not intuitionistically provable always exist  in $\Op{(\R)}$, they are not automatically finite: one has to pick the open sets to be assigned to atomic formul\ae\ with extra care in order to exhibit a  finite counter-model such as the ones guaranteed by the finite model property, see e.g., \cite{BG05}.

Our main result provides a theorem in the style of Tarski that has the advantage of using locally finite Heyting algebras of opens sets only, and hence affords at the same time the advantages of Ja\'skowski's theorem. Our result exposes and exploits, we believe for the first time, the connection between intuitionistic logic and the classical  PL (=piecewise linear) category of compact polyhedra in Euclidean spaces \cite{RourkeSanderson82, Maunder80}. The needed background is recalled in Section \ref{s:pre}, to which the reader is referred for all unexplained notions in the rest of this Introduction. To state our  results we prepare some notation.

For each $n\in\N\coloneqq\left\{0,1,2,\ldots\right\}$ and each (always compact) polyhedron $P\seq\R^{n}$, we write $\Subc{P}$ for the collection of subpolyhedra of $P$ --- i.e., polyhedra in $\R^{n}$ contained in $P$. We set
\[
\Subo{P}:=\left\{O\subseteq P \mid P\setminus O \in \Subc{P}\right\},
\]
where $\setminus$ is set-theoretic difference. Members of $\Subo{P}$ are called \emph{open \textup{(}sub\textup{)}polyhedra} (of $P$).
It is a standard fact that $\Subo{P}$   is a distributive lattice under set-theoretic intersections and unions, and hence a sublattice of\footnote{Here and throughout, $P$ is always equipped with the subspace topology inherited from the Euclidean topology of $\R^{n}$.} $\Op{(P)}$.  In Section \ref{s:onepoly} we prove that $\Subo{P}$ is, in fact, a Heyting subalgebra of $\Op{(P)}$. In the same section we prove that, unlike  $\Op{(P)}$, $\Subo{P}$ is always locally finite. The proof provides  one of the key  insights of the present paper: local finiteness  essentially amounts to the Triangulation Lemma of PL topology,  and thus reflects algebraically a crucial tameness property of polyhedra as opposed to general compact subsets of $\R^{n}$. 

Further tameness properties of polyhedra emerge from their dimension theory, which is far simpler than the dimension theory of  general metric spaces. All standard topological dimension theories  agree on polyhedra \cite{HurewiczWallman41, Pears75}. In fact, an elementary notion of dimension is available for every nonempty polyhedron $\emptyset\neq P$, in that $\dim{P}\leq d$ holds if, and only if, any $d+2$ distinct points of $P$ are affinely dependent. In Section \ref{s:dim}, we  establish a fundamental connection between the topological dimension of $P$ and the structure of $\Subo{P}$: the latter lies in the variety of Heyting algebras of bounded depth $d$ if, and only if, $\dim{P}\leq d$. Recall that the \emph{bounded-depth axiom schemata} (see, e.g., \cite[Section 2.5]{CZ97}) are inductively defined as follows, over the countably infinite set $\{\alpha_{0},\ldots,\alpha_{n},\ldots\}$ of  propositions:
\[
 \BD_{d}\coloneqq
 \begin{cases}
\left(\,\alpha_0 \vee \neg \alpha_0 \,\right) & \text{ if $d=0$, and}\\
\left(\,\alpha_{d} \vee (\alpha_{d} \to \BD_{d-1})\,\right) & \text{ if $d\geq 1$.}
 \end{cases}
\]
If now $\P$ is any  family of polyhedra, we write $\Log{\P}$ for the extension of intuitionistic logic determined by $\P$, namely, the unique intermediate logic corresponding to the variety of Heyting algebras generated by the collection of Heyting algebras $\left\{\Subo{P}\mid P\in\P\right\}$. 

 Given $d\in\N$, let us denote by $\Polyd$ the set of all polyhedra of dimension less than or equal to $d$. Consider   any finite poset $A$ of depth $d\in\N$. In Section \ref{s:nerves}, using  Alexandrov's notion of  nerve \cite{Alexandrov98}, we construct a polyhedron $P$ of dimension $d$  such that the Heyting algebra of upper sets of $A$ embeds into the Heyting algebra $\Subo{P}$. This  leads to our main result:
\begin{mainthm}\label{t:A}For each $d\in\N$,
$\Log{\Polyd}$ is intuitionistic logic extended by the axiom schema $\BD_{d}$. Hence, the logic  $\Log{\,\bigcup_{d\in\N} \Polyd\,}$ of all polyhedra is  intuitionistic logic.
\end{mainthm}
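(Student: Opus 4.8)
The plan is to establish the equality $\Log{\Polyd}=\mathsf{IPC}+\BD_{d}$ by proving two inclusions, and then obtain the final assertion by letting $d$ range over $\N$. For the inclusion $\mathsf{IPC}+\BD_{d}\seq\Log{\Polyd}$ it is enough, since $\Log{\Polyd}$ is by construction an intermediate logic, to check that the schema $\BD_{d}$ is valid in $\Subo P$ for every $P\in\Polyd$. But $P\in\Polyd$ means $\dim P\leq d$, and the dimension theorem of Section \ref{s:dim} says precisely that this holds if and only if $\Subo P$ belongs to the variety of Heyting algebras of bounded depth $d$ --- i.e.\ the variety axiomatised by $\BD_{d}$. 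Hence $\BD_{d}$ is a theorem of $\Log{\Polyd}$, giving the first inclusion.

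For the converse inclusion $\Log{\Polyd}\seq\mathsf{IPC}+\BD_{d}$ I would argue contrapositively. Suppose $\varphi$ is not a theorem of $\mathsf{IPC}+\BD_{d}$. Invoking the standard fact (see, e.g., \cite[Section 2.5]{CZ97}) that $\mathsf{IPC}+\BD_{d}$ enjoys the finite model property and is complete with respect to the finite posets of depth at most $d$, we obtain a finite poset $A$ with $\depth A\leq d$ refuting $\varphi$ in its Heyting algebra of upper sets $\Up A$. The nerve construction of Section \ref{s:nerves} then yields a polyhedron $P$ with $\dim P=\depth A\leq d$, so that $P\in\Polyd$, together with a Heyting-algebra embedding $\Up A\hookrightarrow\Subo P$. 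Since validity of formul\ae\ is inherited by subalgebras, a refutation of $\varphi$ in the subalgebra $\Up A$ is also a refutation of $\varphi$ in $\Subo P$; concretely, one composes the refuting valuation with the embedding, which preserves the top element and all Heyting operations. Thus $\varphi\notin\Log{\Polyd}$, and the second inclusion follows.

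Combining the inclusions gives $\Log{\Polyd}=\mathsf{IPC}+\BD_{d}$ for each $d\in\N$. Because $\bigcup_{d\in\N}\Polyd$ is the class of all polyhedra, one has $\Log{\bigcup_{d\in\N}\Polyd}=\bigcap_{d\in\N}\Log{\Polyd}=\bigcap_{d\in\N}\bigl(\mathsf{IPC}+\BD_{d}\bigr)$. The finite model property of intuitionistic logic (Ja\'skowski's theorem) closes the argument: every finite poset has some finite depth $d$, so any non-theorem of $\mathsf{IPC}$ is refuted on a finite poset of some depth $d$, and hence is already not a theorem of $\mathsf{IPC}+\BD_{d}$; therefore $\bigcap_{d\in\N}(\mathsf{IPC}+\BD_{d})=\mathsf{IPC}$.

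I expect the real mathematical weight to lie entirely in the two results being quoted, namely the dimension characterisation of Section \ref{s:dim} and --- above all --- the nerve construction of Section \ref{s:nerves}, whose task is to realise an \emph{arbitrary} finite poset of depth $d$ as a Heyting subalgebra of $\Subo P$ for a polyhedron $P$ of exactly matching dimension. Granting these, the assembly above is a routine application of the dictionary between intermediate logics and varieties of Heyting algebras; the only delicate points are getting the direction of transfer under a subalgebra embedding right (refutations pass \emph{up}, from $\Up A$ to $\Subo P$), and keeping the indices $\dim P$, $\depth A$ and the schema $\BD_{d}$ properly aligned.
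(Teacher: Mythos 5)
Your proposal is correct and follows essentially the same route as the paper: one inclusion from the dimension theorem of Section \ref{s:dim}, the converse by taking a finite poset $A$ of depth at most $d$ refuting $\varphi$ (via the finite model property for $\mathsf{IPC}+\BD_d$), realising it geometrically through the nerve construction of Section \ref{s:nerves} so that $\Up{A}$ embeds into $\Subo{P}$ with $\dim P=\depth A\leq d$, and concluding the second statement from Ja\'skowski's theorem. The only cosmetic difference is that you quote the finite model property of $\mathsf{IPC}+\BD_d$ as a black box where the paper assembles it from Lemmas \ref{l:birkhoff}, \ref{l:fmp} and \ref{l:bd}.
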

We prove the theorem in Section \ref{s:main}. Our proof is self-contained to within the standard facts from PL topology and Heyting algebras recalled in Section \ref{s:pre}.

Returning to Tarski's theorem, let us consider Euclidean spaces $\R^{N}$ and $\R^{n}$ with $N>n\in\N$. In line with the compact setting of the present paper, let us in fact confine attention to their unit cubes $[0,1]^{N}$ and $[0,1]^{n}$. Then Tarski's results show, {\it inter alia}, that the Heyting algebras $\Op{([0,1]^{N})}$ and $\Op{([0,1]^{n})}$ satisfy precisely the same equations --- i.e., in both cases the corresponding logic is intuitionistic logic --- regardless of the fact that one cube has strictly larger topological dimension than the other.  However, if we consider the smaller Heyting algebras of open subpolyhedra of the two cubes,
then $\BD_{n}$  is valid in $[0,1]^{n}$ and  is refuted in $[0,1]^{N}$. Restriction to a class of  tame, geometric subsets of Euclidean space such as the polyhedra of our paper 
thus allows us to express the dimension of Euclidean spaces by means of intuitionistic logic  and Heyting algebras.
\section{Preliminaries}\label{s:pre}
We assume familiarity with intuitionistic logic and Heyting algebras. A few standard references are \cite{BalbesDwinger74, Johnstone82, MacLaneMoerdijk94, CZ97}. 
In this section we recall what we need. On the other hand, we assume rather less about PL topology. All needed definitions and results are recalled in detail in this section. A few standard  references are \cite{Stallings67, Hudson69, Glaser70, Maunder80, RourkeSanderson82}.

`Distributive lattice' means `bounded distributive lattice'; homomorphisms are to preserve both the maximum ($\top$) and the minimum ($\bot$) element. We write $\wedge$ and $\vee$ for meets and joins, and write $\to$ and $\neg$ for  Heyting implication and negation.
\subsection{Posets, frames and p-morphisms}\label{ss:pmor}
We denote the partial order relation on any poset by $\leq$, unless otherwise specified. Given any poset $A$ and any $a\in A$, we set 
\begin{align*}
\uparrow{}a &\df\left\{x \in  A \mid a\leq x\right\},\\
\downarrow{}a &\df\left\{x \in  A \mid x\leq a\right\}.
\end{align*}
An \emph{upper set} in $A$ is a subset $U\seq A$ closed under $\uparrow$: if $a\in A$ satisfies $a\in U$, then $\uparrow{}a\seq U$. Similarly, a \emph{lower set} in $A$ is a subset  closed under $\downarrow$.
A \emph{chain} is a totally ordered set. A \emph{chain in $A$} is a subset $C\seq A$ that is a chain when equipped with the order inherited from $A$. We  define the \emph{depth} of $A$ to be
\[
\depth{A}\df\sup{\left\{|C|-1\, \mid\,  C \seq A \text{ is a chain in $A$} \right\}}\in \N\cup\left\{\infty\right\}.
\]
If $A$ and $B$ are posets, a \emph{p-morphism} from $A$ to $B$ is an order-preserving function $f\colon A\to B$ that commutes with $\uparrow$: for each $a\in A$,
\[
f[\uparrow{}a]=\uparrow{}f(a).
\]
Here and throughout, $f[\cdot]$ denotes direct image under the function $f$. Similarly, $f^{-1}[\cdot]$ will denote inverse image under the function $f$.

An \emph{\textup{(}intuitionistic Kripke\textup{)} frame} is just a poset. It is \emph{rooted} if it has a minimum. Any frame $A$ gives rise to a Heyting algebra. First, set
\[
\Up{A}\df\left\{U\seq A \mid \text{$U$ is an upper set in $A$}\right\}.
\]
Under the inclusion order, $\Up{A}$ is a complete distributive lattice; arbitrary meets and joins are provided by set-theoretic unions and intersections. Hence the meet operation has an adjoint, the uniquely determined  implication of $\Up{A}$ that makes it into a Heyting algebra.\footnote{See Subsection \ref{ss:frame} for the generalisation of this construction to all topological spaces.} For later use in the paper, we also prepare the dual notation
\[
\Lo{A}\df\left\{L\seq A \mid \text{$L$ is lower set in $A$}\right\}.
\]
As for $\Up{A}$, we will always regard $\Lo{A}$ as a complete distributive lattice under the inclusion order. $\Lo{A}$ has a uniquely determined co-Heyting algebraic structure.\footnote{As well as a Heyting one that will not be used in this paper.}

Conversely, we can associate a poset to any Heyting algebra $H$. Set
\[
\Spec{H}\df\left\{F\seq H\mid \text{$F$ is a prime filter} \right\}.
\]
Here, we mean that $F$ is a prime filter of the underlying distributive lattice of $H$. Equipping $\Spec{H}$ with the inclusion order, we obtain a poset. 

The Heyting algebras of the form $\Up{A}$, as $A$ ranges over all finite posets, are precisely the finite Heyting algebras. To see this, given a Heyting algebra $H$, we consider the \emph{Stone map}:
\begin{align}
\widehat{\cdot}\, \colon H&\longrightarrow \Up{\Spec{H}}\label{eq:stone}\\
h\in H&\longmapsto \widehat{h}\df\left\{\mathfrak{p}\in\Spec{H}\mid h\in\mathfrak{p}\right\}.\nonumber
\end{align}
The following goes back to \cite{Birkhoff37}.
\begin{lem}\label{l:birkhoff}
For any finite Heyting algebra $H$, the Stone map \eqref{eq:stone} 
is an isomorphism of Heyting algebras.
\end{lem}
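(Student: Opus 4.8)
The plan is to show that $\widehat{\cdot}$ is a bijective homomorphism of bounded distributive lattices; preservation of $\to$ then comes for free. Indeed, a finite distributive lattice carries a \emph{unique} Heyting implication, namely the residual $a\to b=\bigvee\{c\mid a\wedge c\leq b\}$ of meet (this is precisely the ``uniquely determined implication'' of $\Up{\Spec{H}}$ recorded above). Hence any lattice isomorphism between two finite Heyting algebras automatically transports the implication of the source to that of the target, and a fortiori the negation $\neg h=h\to\bot$. So once $\widehat{\cdot}$ is shown to be a lattice isomorphism there is nothing left to verify.

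First I would record that $\widehat{\cdot}$ lands in $\Up{\Spec{H}}$ and is a bounded-lattice homomorphism. That each $\widehat{h}$ is an upper set is immediate from the definition of a filter; the identities $\widehat{h\wedge h'}=\widehat{h}\cap\widehat{h'}$ and $\widehat{\top}=\Spec{H}$ use that prime filters are filters, while $\widehat{h\vee h'}=\widehat{h}\cup\widehat{h'}$ and $\widehat{\bot}=\emptyset$ use respectively primeness and properness. These are routine.

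The heart of the argument --- and the only place where finiteness is essential --- is bijectivity, for which I would use the classical description of $\Spec{H}$ for a finite distributive lattice. Call $j\in H$ \emph{join-irreducible} if $j\neq\bot$ and $j=a\vee b$ forces $j\in\{a,b\}$. Every filter of the finite lattice $H$ is principal, and using distributivity one checks that $\uparrow j$ is prime exactly when $j$ is join-irreducible: if $j\leq a\vee b$ then $j=(j\wedge a)\vee(j\wedge b)$, so $j\leq a$ or $j\leq b$. Thus $j\mapsto\uparrow j$ is an order-reversing bijection from the join-irreducibles of $H$ onto $\Spec{H}$, with inverse $\mathfrak{p}\mapsto\bigwedge\mathfrak{p}$. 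Injectivity of $\widehat{\cdot}$ now follows because, $H$ being finite, every element is the join of the join-irreducibles below it; so $h\not\leq h'$ yields a join-irreducible $j\leq h$ with $j\not\leq h'$, whence $\uparrow j\in\widehat{h}\setminus\widehat{h'}$. For surjectivity, given an upper set $U\seq\Spec{H}$, I would set $h\df\bigvee\{j\mid j\text{ join-irreducible},\ \uparrow j\in U\}$ and verify, again via join-irreducibility and distributivity, that a join-irreducible $j$ satisfies $j\leq h$ iff $\uparrow j\in U$; since, under the antitone bijection above, upper sets of $\Spec{H}$ correspond to down-sets of join-irreducibles, this gives $\widehat{h}=U$.

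I expect the only genuinely delicate point to be the identification of $\Spec{H}$ with the join-irreducibles, together with the order-reversal bookkeeping relating upper sets of $\Spec{H}$ to down-sets of join-irreducibles in the surjectivity step. Everything else is formal, and the Heyting clause is automatic by uniqueness of the residual.
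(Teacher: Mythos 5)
Your proof is correct, and it is essentially the argument the paper is deferring to: the paper gives no proof of its own but cites \cite{CZ97} and \cite{Morandi05}, which establish exactly the Birkhoff representation you reconstruct (prime filters of a finite distributive lattice are the principal filters at join-irreducibles, every element is the join of the join-irreducibles below it, and upper sets of $\Spec{H}$ correspond antitonely to down-sets of join-irreducibles). Your observation that the Heyting implication comes for free is also right, since in any Heyting algebra $a\to b$ is the largest $c$ with $a\wedge c\leq b$ and is therefore determined by the lattice order alone.
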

\begin{proof} For detailed proofs see \cite[Sec.~8.4]{CZ97} and \cite{Morandi05}.
\end{proof}
With the above in place, a modern statement of a part of Ja\'skowski's  result cited in the Introduction is:
\begin{lem}[The finite model property]\label{l:fmp}The equational class of Heyting algebras is generated by the finite Heyting algebras: any Heyting algebra is a homomorphic image of a subalgebra of a product of  Heyting algebras of the form $\Up{A}$, as $A$ ranges over all  finite posets. 
\end{lem}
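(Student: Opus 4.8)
The plan is to recast the statement, via Birkhoff's variety theorem, as the finite model property in its semantic guise. Since the class of algebras of the form $\Up{A}$ with $A$ finite coincides with the class of all finite Heyting algebras by Lemma \ref{l:birkhoff}, and since the variety generated by a class $\mathcal{K}$ is exactly $\mathsf{HSP}(\mathcal{K})$, the displayed claim is equivalent to the assertion that every Heyting-algebra identity holding in all finite Heyting algebras already holds in every Heyting algebra. Because any identity $s=t$ is equivalent, over Heyting algebras, to $(s\to t)\wedge(t\to s)=\top$, it suffices to show the following: whenever a term (= formula) $\varphi$ fails to evaluate to $\top$ under some valuation $v$ into some Heyting algebra $H$, there is a finite Heyting algebra on which $\varphi$ is likewise not identically $\top$.

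First I would fix such a refuting valuation $v$ of the propositional variables into $H$, and let $\Sigma$ be the finite, subformula-closed set of subformulas of $\varphi$. Writing $v$ also for its extension to all formulas, I would consider the set of designated values $D\df\{v(\psi)\mid\psi\in\Sigma\}\cup\{\top,\bot\}\seq H$ and form the sublattice $L\seq H$ generated by $D$ using only $\wedge$, $\vee$, $\top$, $\bot$ --- crucially, \emph{not} the Heyting operations. The decisive structural input is that bounded distributive lattices are locally finite, so $L$ is a finite bounded distributive sublattice of $H$; being finite and distributive, $L$ carries its own intrinsic Heyting implication $\to_L$.

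The hard part --- and the reason one cannot simply take the Heyting subalgebra generated by $D$, which may well be infinite (the Rieger--Nishimura lattice being the paradigm) --- is to verify that the intrinsic implication $\to_L$ reproduces the correct values on the pairs arising from $\Sigma$. Here I would use the inclusion $L\seq H$ together with the fact that $L$ is a finite sublattice with the same bounds: for $a,b\in L$ one has $a\to_L b=\max\{c\in L\mid a\wedge c\leq b\}$, the maximum existing by distributivity of $L$. When $a=v(\chi)$ and $b=v(\eta)$ for an implication subformula $\chi\to\eta\in\Sigma$, the element $v(\chi\to\eta)=v(\chi)\to_H v(\eta)$ lies in $L$ (as $\chi\to\eta\in\Sigma$) and is already the largest element of all of $H$ satisfying $v(\chi)\wedge c\leq v(\eta)$; hence it is a fortiori the largest such element of $L$, so that $v(\chi)\to_L v(\eta)=v(\chi\to\eta)$. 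With this matching in hand, a routine induction on the structure of formulas shows that $v$, now read as a valuation into the finite Heyting algebra $L$, satisfies $\widetilde v(\psi)=v(\psi)$ for every $\psi\in\Sigma$: meets, joins, and bounds are computed correctly because $L$ is a bounded sublattice, and implications (hence negations) are correct by the matching just established. In particular $\widetilde v(\varphi)=v(\varphi)\neq\top$, so $\varphi$ is refuted in the finite Heyting algebra $L$, as required. Finally, Lemma \ref{l:birkhoff} identifies $L$ with $\Up{\Spec{L}}$, closing the loop with the stated form of the claim.
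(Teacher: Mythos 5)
Your proof is correct, but it is a genuinely different route from the paper's, which offers no argument at all: the paper simply cites \cite[Thms.~2.57 and 7.21]{CZ97} (i.e.\ it delegates to a filtration-style proof of the finite model property for {\sf Int} together with the standard completeness of algebraic semantics). What you have written out is, in substance, the classical algebraic proof that the paper's introduction attributes to McKinsey and Tarski \cite{McK41, McKinseyTarski44} and contrasts with filtration (cf.\ \cite{BB16}): reduce via Birkhoff's $\mathsf{HSP}$ theorem to refuting a single failed identity in a finite algebra, generate from the (finitely many) values of the subformulas only the \emph{bounded distributive sublattice} $L\seq H$, invoke local finiteness of distributive lattices to get $L$ finite, equip $L$ with its intrinsic implication $a\to_L b=\max\{c\in L\mid a\wedge c\leq b\}$, and observe that on the pairs coming from implication subformulas this agrees with $\to_H$ because the $H$-maximum already lies in $L$. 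All the steps check out, including the two points where care is needed: that $v(\chi\to\eta)\in L$ precisely because $\Sigma$ is subformula-closed, and that the induction only needs $\to_L$ and $\to_H$ to agree on those particular pairs, not everywhere. Compared with the citation, your argument costs a page but buys self-containedness and makes visible that the engine is local finiteness of distributive lattices as against the failure of local finiteness for Heyting algebras (the Rieger--Nishimura lattice), which dovetails nicely with the paper's own theme that local finiteness of $\Subo{P}$ is what makes polyhedra logically tame.
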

\begin{proof} See e.g.\ \cite[Thms.~2.57 and 7.21]{CZ97}. 
\end{proof}
\begin{rem} One can restrict the class of finite posets featuring in  Lemma \ref{l:fmp} in various ways. Thus, Ja\'skowski exhibited a specific recursive sequence of posets. It is also known, for instance, that the class of all finite trees (=rooted frames $T$ such that $\downarrow{}t$ is a chain for each $t\in T$) suffices, see e.g., \cite[Cor.~2.33 and Ex.~2.17]{CZ97}.
 In this paper we only need the general form of the result as stated in Lemma \ref{l:fmp}.\qed
\end{rem}

\subsection{Finite Esakia duality}\label{ss:esakia}
Lemma \ref{l:birkhoff} can be lifted to a contravariant equivalence of categories between Heyting algebras and Esakia spaces \cite{Esakia74}. In the finite case of interest here, topology can and will be dispensed with. Given a homomorphism of finite Heyting algebras $h\colon H \to K$,
set
\begin{align*}
\Spec{h} \colon \Spec{K} &\longrightarrow\Spec{H}\\
\mathfrak{p}\in\Spec{K} &\longmapsto h^{-1}[\mathfrak{p}]\in\Spec{H}.
\end{align*}
Dually, given a $p$-morphism of posets $f\colon A\to B$, set
\begin{align*}
\Up{f} \colon \Up{B} &\longrightarrow\Up{A}\\
U\in\Up{B} &\longmapsto f^{-1}[U]\in\Up{A}.
\end{align*}
Let now $\HAf$ and $\Posf$ denote the categories of finite Heyting algebras and their homomorphisms, and of finite posets and p-morphisms, respectively. Then the above defines functors
\begin{align*}
\Spec \colon \HAf &\longrightarrow\Posf^{\rm op},\\
\Up \colon \Posf &\longrightarrow \HAf^{\rm op}.
\end{align*}
(We are indicating by ${\sf C}^{\rm op}$ the category opposite to the category {\sf C}, as is standard.)
\begin{lem}[Esakia duality, finite case]\label{l:esakia} The functors $\Spec$ and $\Up$ are an equivalence of categories.
\end{lem}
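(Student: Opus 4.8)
The plan is to exhibit the two natural isomorphisms witnessing the equivalence: the Stone map $\widehat{\cdot}\colon \mathrm{Id}_{\HAf}\Rightarrow \Up\circ\Spec$ on the algebra side, and a dual transformation $\epsilon\colon \mathrm{Id}_{\Posf}\Rightarrow \Spec\circ\Up$ on the poset side (note that the two contravariant functors compose to covariant ones, matching the identities). Lemma \ref{l:birkhoff} already supplies the first componentwise: for each finite $H$ the Stone map $\widehat{\cdot}\colon H\to\Up(\Spec H)$ is an isomorphism of Heyting algebras. So the only remaining task here is to check that $\widehat{\cdot}$ is natural in $H$. Given a homomorphism $h\colon H\to K$, unwinding the definitions of $\Spec h$ and $\Up(\Spec h)$ reduces the naturality square to the identity $\Up(\Spec h)(\widehat{x})=\widehat{h(x)}$, which is a one-line computation: $(\Spec h)^{-1}[\widehat{x}]=\{\mathfrak q\in\Spec K\mid x\in h^{-1}[\mathfrak q]\}=\{\mathfrak q\mid h(x)\in\mathfrak q\}=\widehat{h(x)}$.

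For the poset side, I would define, for each finite poset $A$, the map
\[
\epsilon_A\colon A\longrightarrow \Spec(\Up A),\qquad a\longmapsto \{U\in\Up A\mid a\in U\}.
\]
One checks at once that $\epsilon_A(a)$ is a prime filter of $\Up A$ (intersections and unions of upper sets compute the meets and joins of $\Up A$, so membership of $a$ is a filter condition that is prime), and that $\epsilon_A$ is order-preserving. Naturality in $A$ is again immediate: for a p-morphism $f\colon A\to B$ and $a\in A$ we have $\Spec(\Up f)(\epsilon_A(a))=\{V\in\Up B\mid f^{-1}[V]\in\epsilon_A(a)\}=\{V\mid f(a)\in V\}=\epsilon_B(f(a))$.

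The crux is to show that each $\epsilon_A$ is an isomorphism in $\Posf$. Injectivity follows by testing against the principal upper sets $\uparrow a$, and the p-morphism condition together with injectivity reduces to the equivalence $\epsilon_A(a)\subseteq\epsilon_A(b)\iff a\leq b$, which is transparent from the behaviour of upper sets; hence $\epsilon_A$ is an order-isomorphism, and its inverse is automatically a p-morphism. For surjectivity I would exploit finiteness: any prime filter $\mathfrak p$ of $\Up A$ has a least element $U_0=\bigcap\mathfrak p$, which lies in $\mathfrak p$, and primeness forces $U_0$ to be join-irreducible in $\Up A$, hence of the form $\uparrow a$ for a unique $a\in A$; then $\mathfrak p=\{U\mid \uparrow a\subseteq U\}=\epsilon_A(a)$. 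With both $\widehat{\cdot}$ and $\epsilon$ established as natural isomorphisms, the equivalence of categories follows formally. I expect the surjectivity step --- identifying prime filters of $\Up A$ with the join-irreducibles $\uparrow a$ --- to be the main obstacle; the rest is bookkeeping with the contravariance built into $\Spec$ and $\Up$.
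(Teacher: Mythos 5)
Your proposal is correct and is exactly the standard argument that the paper delegates to its references (\cite{CZ97}, \cite{Morandi05}): naturality of the Stone map from Lemma \ref{l:birkhoff} on the algebra side, and the unit $a\mapsto\{U\in\Up{A}\mid a\in U\}$ on the poset side, with surjectivity coming from the fact that in a finite lattice every prime filter is generated by a join-irreducible, and the join-irreducibles of $\Up{A}$ are the principal upper sets. The one step you leave implicit is that the functors are well defined on morphisms --- in particular that $\Spec{h}$ satisfies the ``back'' half of the p-morphism condition $\Spec{h}[\uparrow\mathfrak{p}]=\uparrow\Spec{h}(\mathfrak{p})$, which is the only place where preservation of the Heyting implication by $h$ (rather than mere lattice-homomorphism) is used; this is routine in the finite case but worth recording.
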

\begin{proof}
See \cite[Exs.~7.5, 7.6 and Sec.~8.5]{CZ97} and \cite{Morandi05}.
\end{proof}
\begin{rem}As with all duality results, Lemma \ref{l:esakia} provides a dictionary between notions in $\HAf$ and notions in $\Posf$. For example, one shows that a surjective p-morphism of finite posets dualises to an injective homomorphism of finite Heyting algebras, i.e.\ to a Heyting subalgebra, and conversely. We do not dwell on the  details of such translations, and use them whenever needed in the sequel. \qed
\end{rem}

\subsection{Bounded depth}\label{ss:bd}Through the equations corresponding to the formul\ae\ $\BD_{d}$ of the Introduction,
one can  express equationally the analogue for Heyting algebras of the Krull dimension of commutative rings.\footnote{For recent related literature see  \cite{BBL-BvM16}, where a Krull dimension is defined for any topological space and is used in obtaining fine-grained topological completeness results for modal and intermediate logics.}
\begin{lem}\label{l:bd}For any non-trivial Heyting algebra $H$ and each $d\in \N$, the following are equivalent.
\begin{enumerate}[label={\rm (\roman*)}]
\item The longest chain of prime filters in $H$ has cardinality $d+1$.
\item $\depth{\Spec{A}}=d$.
\item $H$ satisfies the equation $\BD_{d}=\top$, and fails each equation $\BD_{d'}=\top$ with $1\leq d'<d$.
\end{enumerate}
\end{lem}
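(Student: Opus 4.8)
The plan is to reduce the whole statement to the single sublemma
\[
H \text{ validates } \BD_{d}=\top \quad\Longleftrightarrow\quad \depth{\Spec{H}}\leq d. \qquad (\ast)
\]
Granting $(\ast)$, equivalence (i) $\Leftrightarrow$ (ii) is immediate from the definition of depth, since a longest chain of prime filters of cardinality $d+1$ is exactly a witness that $\sup\{|C|-1\}=d$. For (ii) $\Leftrightarrow$ (iii), note that by $(\ast)$ validity of $\BD_{d}$ means $\depth{\Spec{H}}\leq d$, while failure of $\BD_{d-1}$ means $\depth{\Spec{H}}>d-1$; together these pin the depth to exactly $d$, and the failure of the remaining lower schemata $\BD_{d'}$ with $d'<d-1$ is then automatic. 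Here non-triviality enters in the base case $d=0$: it guarantees $\Spec{H}\neq\emptyset$, hence $\depth{\Spec{H}}\geq 0$, so that validity of $\BD_{0}$ forces $\depth{\Spec{H}}=0$.

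The engine behind $(\ast)$ is the Stone map $\widehat{\cdot}\colon H\to\Up{\Spec{H}}$ of \eqref{eq:stone}. First I would verify that it commutes not only with $\wedge,\vee,\top,\bot$ but also with $\to$ and $\neg$, so that for any elements $a_{0},\dots,a_{d}\in H$ and the induced valuation $V\colon\alpha_{i}\mapsto\widehat{a_{i}}$ one has the truth lemma
\[
\mathfrak{p}\Vdash_{V}\varphi \quad\Longleftrightarrow\quad \varphi^{H}(a_{0},\dots,a_{d})\in\mathfrak{p}
\]
for every $\mathfrak{p}\in\Spec{H}$ and every formula $\varphi$ in $\alpha_{0},\dots,\alpha_{d}$. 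The cases $\wedge,\vee,\top,\bot$ are immediate from the defining properties of prime filters; the cases $\to,\neg$ rest on the prime filter extension lemma, namely that if $a\to b\notin\mathfrak{p}$ then the filter generated by $\mathfrak{p}\cup\{a\}$ avoids $b$ and so, by the prime filter theorem, extends to some $\mathfrak{q}\supseteq\mathfrak{p}$ with $a\in\mathfrak{q}$ and $b\notin\mathfrak{q}$.

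For the direction ``$\Leftarrow$'' of $(\ast)$ I would show, by induction on $d$, that any point $\mathfrak{p}\in\Spec{H}$ all of whose upward chains have at most $d+1$ elements forces $\BD_{d}$ under every valuation. For $d=0$ such a $\mathfrak{p}$ is maximal, so $\mathfrak{p}\not\Vdash\alpha_{0}$ already yields $\mathfrak{p}\Vdash\neg\alpha_{0}$. For the inductive step, if $\mathfrak{p}\not\Vdash\alpha_{d}$ then any $\mathfrak{q}\geq\mathfrak{p}$ with $\mathfrak{q}\Vdash\alpha_{d}$ is strictly above $\mathfrak{p}$, hence its upward chains have at most $d$ elements and $\mathfrak{q}\Vdash\BD_{d-1}$ by induction; thus $\mathfrak{p}\Vdash\alpha_{d}\to\BD_{d-1}$. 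When $\depth{\Spec{H}}\leq d$ this gives $\Up{\Spec{H}}\models\BD_{d}$; since an element lying in every prime filter must equal $\top$, the truth lemma then yields $\BD_{d}^{H}(a_{0},\dots,a_{d})=\top$ for all $a_{i}$, i.e.\ $H\models\BD_{d}$.

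The direction ``$\Rightarrow$'', which I expect to be the real obstacle, I would prove by contraposition starting from a chain $\mathfrak{p}_{0}\subsetneq\cdots\subsetneq\mathfrak{p}_{d+1}$ of prime filters. The naive Kripke refutation of $\BD_{d}$ assigns principal up-sets $\uparrow\mathfrak{p}_{i}$ to the atoms, but these need not lie in the image of $\widehat{\cdot}$, and so cannot be read back as a refutation inside $H$; this mismatch is the crux. The remedy is to use the strictness of each inclusion: from every gap $\mathfrak{p}_{k}\subsetneq\mathfrak{p}_{k+1}$ pick an element separating the two filters, so that the corresponding $\widehat{\cdot}$-image excludes $\mathfrak{p}_{k}$ yet contains the entire tail $\mathfrak{p}_{k+1},\dots,\mathfrak{p}_{d+1}$, and feed these elements to the atoms $\alpha_{0},\dots,\alpha_{d}$ in the order dictated by the recursive definition of $\BD_{d}$. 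An induction mirroring the soundness argument then shows $\mathfrak{p}_{0}\not\Vdash_{V}\BD_{d}$, whence by the truth lemma $\BD_{d}^{H}(a_{0},\dots,a_{d})\notin\mathfrak{p}_{0}$ and in particular $\BD_{d}^{H}(a_{0},\dots,a_{d})\neq\top$. This establishes $(\ast)$, and with (i) $\Leftrightarrow$ (ii) already in hand the lemma follows.
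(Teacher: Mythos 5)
Your proof is correct, but note that the paper does not actually prove this lemma: it simply cites \cite[Prop.~2.38 and Table 9.7]{CZ97} and \cite{BBL-BvM16}, so the comparison is with the standard literature rather than with an argument in the text. What you have written is the canonical-model proof reconstructed algebraically, and you have correctly isolated the two places where real content is needed: first, that the Stone map $\widehat{\cdot}\colon H\to\Up{\Spec{H}}$ preserves $\to$ (equivalently, the truth lemma for implication), which rests on the prime filter extension lemma and hence, for infinite $H$, on the prime ideal theorem; and second, that in the direction ``validity of $\BD_{d}$ bounds the depth'' one cannot use the naive valuation by principal up-sets $\uparrow{}\mathfrak{p}_{i}$, since these need not be of the form $\widehat{a}$, and must instead feed the atoms with elements $a_{k}\in\mathfrak{p}_{k+1}\setminus\mathfrak{p}_{k}$ separating consecutive filters of the chain. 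Both steps work exactly as you describe, and the downward induction showing $\mathfrak{p}_{k}\not\Vdash\BD_{d-k}$ goes through. One small caveat: as literally stated, item (iii) requires failure of $\BD_{d'}$ only for $1\leq d'<d$, so for $d=1$ it imposes no failure condition and is satisfied by any non-trivial Boolean algebra, whose spectrum has depth $0$; the equivalence (ii) $\Leftrightarrow$ (iii) therefore needs the range $0\leq d'<d$, which is what the paper itself uses in Theorem \ref{t:dim}. Your argument tacitly assumes the failure of $\BD_{d-1}$ is available in all cases, i.e.\ it proves the corrected statement --- a defect of the lemma's indexing rather than of your proof.
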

\begin{proof} See \cite[Prop.~2.38 and Table 9.7]{CZ97} and  \cite{BBL-BvM16}.
\end{proof}

\subsection{Heyting and co-Heyting algebras of open and closed sets.}\label{ss:frame}The open (closed) sets of a topological space provide important examples of (co-)Heyting algebras. For background on co-Heyting algebras we refer to \cite[\S 1 and {\it passim}]{McKinseyTarski46}, where these structures were first axiomatised equationally, and systematically investigated under the name of `Brouwerian algebras'. We write $\gen$ to denote co-Heyting negation, and $\leftarrow$ to denote co-Heyting implication.\footnote{McKinsey's and Taski's original notations were $\neg$ and $\stackrel{_{.}}{-}$, respectively.}

 If $X$ is any topological space, we write $\Op{(X)}$ for its collection of opens sets. Then $\Op{(X)}$ is a complete distributive lattice, bounded above by $\top\df X$ and below by $\bot\df\emptyset$, with  joins given by set-theoretic unions and meets given by
\[
\bigwedge F\df \inte{\,\bigcap F}
\]
for any family $F$ of open subsets of $X$, where $\inte$ denotes the interior operator of the given topology on $X$. Therefore $\Op{(X)}$ has exactly one structure of Heyting algebra compatible with its distributive-lattice structure; namely, for any $U,V\in\Op{(X)}$ the Heyting implication is given by
\begin{equation}\label{e:hi} 
  U \to V \df \bigcup \left\{O\in \Op{(X)} \mid U\cap O \subseteq V\right\} =\inte{(\,(X\setminus U)\cup V)\,)}.
 \end{equation}
 In particular, the Heyting negation is given by
 \begin{equation*}
  \neg U \df U \to \bot =\inte{(X \setminus U)}.
 \end{equation*} 
Dually, the family $\K{(X)}$ of closed sets of $X$ is a complete distributive lattice, bounded above by $\top\df X$ and below by $\bot\df\emptyset$, with  meets given by set-theoretic intersections and joins given by
\[
\bigvee F\df \cl{\,\bigcup F}
\]
for any family $F$ of closed subsets of $X$, where $\cl$ denotes the closure operator of the given topology on $X$.  Therefore $\K{(X)}$ has exactly one structure of co-Heyting algebra compatible with its distributive-lattice structure; namely, for any $C,D\in\K{(X)}$ the co-Heyting implication is given by
\begin{equation}\label{e:ci}
  C \leftarrow D \df \bigcap \left\{K\in \K{(X)} \mid C \seq D\cup K\right\} = \cl{(C\setminus D)}.
 \end{equation} 
 In particular, the co-Heyting negation is given by
 \begin{equation*}
  \gen D \df \top \leftarrow D = \cl{(X \setminus D)}.
 \end{equation*} 
 \begin{rem}All our results in this paper have versions for Heyting and co-Heyting algebras. We stressed the Heyting version in the Introduction, as this relates most directly to inutitionistic logic. However, we will see below that it is at times convenient in proofs to establish the co-Heyting version of the results first, because it is traditional in simplicial topology to work with closed simplices and polyhedra. Proofs for the corresponding Heyting versions are obtained through dual arguments, which we sometime omit.
 \qed
 \end{rem}

\subsection{Polyhedra: basic notions}\label{ss:polybasic}
An \emph{affine combination} of $x_0,\dots,x_d\in\R^n$ is an element $\sum_{i=0}^{d}r_{i}x_{i}\in \R^{n}$, where $r_{i}\in\R$ and $\sum_{i=0}^{d}r_{i}=1$. If, additionally, $r_{i}\geq 0$ for each $i\in\{0,\ldots,d\}$, $\sum_{i=0}^{d}r_{i}x_{i}$ is a \emph{convex combination}.
Given any  subset $S\seq\R^{n}$, the \emph{convex hull} of $S$, written $\conv{S}$, is the collection of all convex combinations of finite subsets of $S$. Then $S$ is \emph{convex} if $S=\conv{S}$, and a \emph{polytope} if $S=\conv{V}$ for a finite set $V\seq\R^n$. A \emph{polyhedron} in $\R^{n}$ is any subset  that can be written as a finite union of polytopes. The union over an empty index set is allowed, so that $\emptyset$ is a polyhedron. Any polyhedron is closed and bounded, hence compact. If $P\seq \R^{n}$ is a polyhedron, by an \emph{open polyhedron} in $P$ we mean the complement of a polyhedron which is included in $P$.
The points $x_0,\dots,x_d\in\R^n$ are \emph{affinely independent} if  the vectors $x_1-x_0, x_2-x_0,\dots,x_d-x_0$ are linearly independent, a condition which is invariant under  permutations of the index set $\{0,\ldots,d\}$. A \emph{simplex} in $\R^{n}$ is a \emph{non-empty}\footnote{It is expedient in this paper not to regard $\emptyset$ as a simplex.} subset of the form $\sigma\coloneqq\conv{V}$, where $V\coloneqq\{x_{0},\ldots,x_{d}\}$ is a set of affinely independent points. Then $V$ is the uniquely determined such affinely independent set \cite[Proposition 2.3.3]{Maunder80}, and $\sigma$ is a \emph{$d$-simplex} with \emph{vertices} $x_{0},\ldots,x_{d}$. A \emph{face} of the simplex $\sigma$ is the convex hull of a non-empty subset of $V$, and thus is itself a $d'$-simplex for a uniquely determined $d'\in\{0,\ldots,d\}$. Hence the $0$-faces of $\sigma$ are precisely its vertices.

We write 
\begin{align*}
\sigma&=x_{0}\cdots x_{d}, \,\,\, \sigma\face\tau,\, \text{ and } \sigma\faceneq\tau
\end{align*}
to indicate that $\sigma$ is the $d$-simplex whose vertices are $x_{0},\ldots,x_{d}$, that $\sigma$ is a face of $\tau$, and that $\sigma$ is a proper (i.e.\ $\neq\tau$) face of $\tau$, respectively. 
If $\sigma=x_{0}\cdots x_{d}\in\R^{n}$, the \emph{relative interior} of $\sigma$, denoted $\rint{\sigma}$, is the topological interior of $\sigma$ in the affine subspace of $\R^{n}$ spanned by\footnote{Recall that the affine subspace spanned by a subset $S\seq\R^{n}$  is the collection of all affine combinations of finite subsets of $S$, or equivalently, the intersection of all affine subspaces of $\R^{n}$ containing $S$.} $\sigma$.  (Thus, the relative interior of a $0$-dimensional simplex --- a point --- is the point itself.)
 To rephrase through  coordinates, note that by the affine independence of the vertices of $\sigma$, for each $x\in\sigma$ there exists a unique choice of $r_{i}\in\R$ with $x=\sum_{i=0}^{d}r_{i}x_{i}$ and $r_i\geq 0$, $\sum_{i=1}^{d} r_{i}=1$.  The $r_{i}$'s are traditionally called the \emph{barycentric coordinates} of $x$. Then $\rint{\sigma}$ coincides with  the subset  of $\sigma$ of those points $x\in\sigma$ whose barycentric coordinates are strictly positive. Note that $\cl{\rint{\sigma}}=\sigma$, the closure being taken in the ambient Euclidean space $\R^n$. In the rest of this paper, for any set $S\seq \R^n$ we use the notation \begin{align}\label{eq:closiurenot}
 \cl{S}
 \end{align}
 to denote the closure of $S$ in the ambient Euclidean space $\R^n$. Observe that if $P\seq\R^n$ is a polyhedron and $S\seq P$, then the closure of $S$ in the subspace $P$ of $\R^n$ agrees with $\cl{S}$, because $P$ is closed in $\R^n$.
\subsection{Polyhedra: the Triangulation Lemma}\label{ss:tl}
\begin{defn}[Triangulation]\label{d:triang} A \emph{triangulation}\footnote{Also known as (\emph{geometric}) \emph{simplicial complex}. Note that the empty triangulation $\emptyset$ is allowed.}  is a finite set $\Sigma$ of simplices in $\R^{n}$ satisfying the following conditions.
\begin{enumerate}
 \item\label{d:t1} If $\sigma\in \Sigma$ and $\tau$ is a face of $\sigma$, then $\tau\in \Sigma$.
 \item\label{d:t2} If $\sigma,\tau\in \Sigma$, then $\sigma \cap \tau$ is either empty, or a common face of $\sigma$ and $\tau$.
\end{enumerate}
The \emph{support}, or \emph{underlying polyhedron}, of the triangulation $\Sigma$ is 
\[
|\Sigma|\coloneqq \bigcup\Sigma \ \seq \R^{n}.
\]
One also says that $\Sigma$ \emph{triangulates} the subset $|\Sigma|$ of $\R^{n}$. A \emph{subtriangulation} of the triangulation $\Sigma$ is any subset $\Delta\seq\Sigma$ that is itself a triangulation. This is equivalent to the condition that $\Delta$ be closed under taking faces --- i.e.\ satisfies just  \eqref{d:t1} in Definition \ref{d:triang} --- for then \eqref{d:t2} follows \cite[Proposition 2.3.6]{Maunder80}. By the \emph{vertices} of $\Sigma$ we mean the vertices of the simplices in $\Sigma$.
\end{defn}

Observe that a subtriangulation of $\Sigma$ is precisely the same thing as a lower set of $\Sigma$, the latter being regarded as a poset under inclusion. This fact will be heavily exploited below, cf.\ in particular Section \ref{s:dim}.
The following standard fact makes precise the idea that a triangulation $|\Sigma|$ provides a finitary description of the triangulated space $\Sigma$.
\begin{lem}
\label{l:interior}
 If $\Sigma$ is a triangulation,  for each  $x\in|\Sigma|$ there is exactly one simplex $\sigma^x\in\Sigma$ such that $x\in\rint{\sigma}$.
\end{lem}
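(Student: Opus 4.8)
The plan is to decouple the two assertions --- existence and uniqueness of the simplex $\sigma^{x}$ --- and to reduce both to the single-simplex fact that every point of a simplex lies in the relative interior of exactly one of its faces. This fact is read off directly from barycentric coordinates: if $\sigma = x_{0}\cdots x_{d}$ and $x = \sum_{i=0}^{d} r_{i}x_{i}$ with $r_{i}\geq 0$ and $\sum_{i} r_{i} = 1$, then the face of $\sigma$ spanned by $\{x_{i}\mid r_{i}>0\}$ is the unique face in whose relative interior $x$ lies. The two conditions in the definition of a triangulation will then be used once each: closure under faces (condition \eqref{d:t1}) for existence, and the intersection condition \eqref{d:t2} for uniqueness.

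For existence, I would start from $x\in|\Sigma| = \bigcup\Sigma$, so that $x$ belongs to some $\sigma\in\Sigma$. Writing $x$ in barycentric coordinates relative to the vertices of $\sigma$ and letting $\tau\face\sigma$ be the face spanned by those vertices whose coordinate is strictly positive, I obtain $x\in\rint{\tau}$; indeed, by the coordinate description of relative interior recalled in Subsection \ref{ss:polybasic}, the points of $\rint{\tau}$ are exactly those all of whose barycentric coordinates relative to $\tau$ are positive. Since $\Sigma$ is closed under taking faces by condition \eqref{d:t1}, we have $\tau\in\Sigma$, and we may set $\sigma^{x}\df\tau$.

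For uniqueness, suppose $x\in\rint{\sigma}\cap\rint{\tau}$ with $\sigma,\tau\in\Sigma$. Then $\sigma\cap\tau\neq\emptyset$, so by condition \eqref{d:t2} the intersection $\rho\df\sigma\cap\tau$ is a common face of both. The crucial elementary observation is that $\rint{\sigma}$ meets no proper face of $\sigma$: a point of a proper face has at least one vanishing barycentric coordinate relative to $\sigma$, whereas every point of $\rint{\sigma}$ has all of them strictly positive. Since $x\in\rho\face\sigma$ and $x\in\rint{\sigma}$, this forces $\rho=\sigma$; symmetrically $\rho=\tau$, whence $\sigma=\tau$.

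The only real content is the barycentric-coordinate description of $\rint{\sigma}$ and its disjointness from proper faces, which the excerpt has already recorded; once these are in hand, the two triangulation axioms supply existence and uniqueness almost formally. I expect the only point needing slight care to be the verification in the existence step that the face $\tau$ obtained by discarding the zero-coordinate vertices genuinely has $x$ in its relative interior --- that is, that $x$ does not land on the boundary of $\tau$ --- but this is immediate from the uniqueness of barycentric coordinates, since the coordinates of $x$ relative to $\tau$ are precisely the nonzero $r_{i}$, all of which are positive.
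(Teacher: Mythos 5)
Your proof is correct and complete; the paper itself gives no argument here but simply cites \cite[Proposition 2.3.6]{Maunder80}, and your barycentric-coordinate argument (existence via the face spanned by the positive coordinates plus closure under faces, uniqueness via condition \eqref{d:t2} and the disjointness of $\rint{\sigma}$ from proper faces) is precisely the standard proof found there. The one auxiliary fact you rely on --- that $\rint{\sigma}$ meets no proper face of $\sigma$ --- is justified adequately by your appeal to the uniqueness of barycentric coordinates, and is in any case established independently in the paper as Lemma \ref{l:ri}.
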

\begin{proof}See \cite[Proposition 2.3.6]{Maunder80}.
\end{proof}
In light of Lemma \ref{l:interior}, in the sequel we adopt the notation $\sigma^{x}$ without further comment; the simplex $\sigma^{x}$ is called the \emph{carrier} of $x$ (in $\Sigma$).

Any subset of $\R^{n}$ that admits a triangulation, being a finite union of simplices, is evidently a polyhedron. The rather less trivial converse is true, too, in the following strong sense.
\begin{lem}[Triangulation Lemma]\label{l:tl}Given finitely many polyhedra $P, P_{1},\ldots, P_{m}$ in $\R^{n}$ with $P_{i}\seq P$ for each $i\in\{1,\ldots,m\}$, there exists a triangulation $\Sigma$ of $P$ such that, for each $i\in\{1,\ldots,m\}$, the collection
\[
\Sigma_{i}\df\left\{\sigma \in \Sigma \mid \sigma \seq P_{i} \right\}
\]
is a triangulation of $P_{i}$, i.e.\ $|\Sigma_{i}|=P_{i}$.
\end{lem}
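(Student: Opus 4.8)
The plan is to reduce the statement to the triangulation of a single \emph{convex cell complex} that simultaneously refines $P$ and all of the $P_{i}$, and then to triangulate that complex combinatorially by an ordered coning (``pulling'') procedure that introduces no new vertices.

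First I would pass from the given polyhedra to their constituent polytopes. Each of $P, P_{1},\dots,P_{m}$ is by definition a finite union of polytopes, and each polytope $\conv{V}$ is a bounded intersection of finitely many closed half-spaces $\{x\in\R^{n}\mid \langle a,x\rangle\le b\}$. I collect all the bounding hyperplanes $\{x\mid \langle a,x\rangle = b\}$ of all the polytopes occurring in $P$ and in the $P_{i}$ into a single finite set $\mathcal H$ of hyperplanes. The arrangement $\mathcal H$ partitions $\R^{n}$ into finitely many relatively open convex cells, each of which, for every $H\in\mathcal H$, either lies on $H$ or lies strictly to one side of $H$. The key observation is that for every open cell $c$ and every constituent polytope $Q$ one has either $c\seq Q$ or $c\cap Q=\emptyset$: each inequality defining $Q$ has its boundary in $\mathcal H$, hence is satisfied throughout $c$ or violated throughout $c$. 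Consequently each $Q$ — and therefore each $P_{i}$, and $P$ itself — is exactly a union of closures of cells of $\mathcal H$, and a simplex contained in the closure $\cl{c}$ of a cell lies in $P_{i}$ precisely when $c\seq P_{i}$. Taking the closed cells contained in $P$ together with all their faces yields a finite convex cell complex $\mathcal K$ whose supports are compatible with every $P_{i}$.

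Next I would triangulate $\mathcal K$ without adding vertices. Fix once and for all a linear order $v_{1},\dots,v_{N}$ on the finitely many $0$-cells of $\mathcal K$, and triangulate by induction on dimension: a $0$-cell is its own simplex; for a cell $C$ of positive dimension whose proper faces are already triangulated, let $v$ be the least vertex of $C$ in the fixed order and cone $v$ over every simplex occurring in the triangulation of a proper face of $C$ that does not contain $v$. Convexity of $C$ guarantees that these cones are genuine simplices and that together they cover $C$ and meet along faces, so $C$ gets triangulated. Because a single global vertex order is used throughout, the triangulations induced on a face shared by two cells coincide; hence the set $\Sigma$ of all simplices produced satisfies the face-closure and intersection conditions \eqref{d:t1} and \eqref{d:t2} of Definition~\ref{d:triang}, i.e.\ $\Sigma$ is a triangulation, and clearly $|\Sigma|=P$.

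Finally I would read off compatibility. For each $i$, the polyhedron $P_{i}$ is a union of closed cells of $\mathcal K$; each such closed cell $\cl{c}\seq P_{i}$ is triangulated into simplices all contained in $\cl{c}\seq P_{i}$, so these simplices belong to $\Sigma_{i}=\{\sigma\in\Sigma\mid\sigma\seq P_{i}\}$ and their union is $P_{i}$. Since $|\Sigma_{i}|\seq P_{i}$ is trivial, $|\Sigma_{i}|=P_{i}$; and as $\Sigma_{i}$ is closed under taking faces it is a subtriangulation, hence a triangulation of $P_{i}$. The only genuinely delicate point is the triangulation of $\mathcal K$ itself: one must verify that the ordered coning produces compatible simplicial subdivisions on cells sharing a face, so that no vertices are duplicated and condition \eqref{d:t2} holds globally. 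This compatibility is exactly what the fixed global ordering of the vertices buys, and is the heart of the standard pulling-triangulation argument; everything else is bookkeeping about the arrangement $\mathcal H$ and convexity.
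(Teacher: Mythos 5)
Your proof is correct, and it is essentially the standard argument behind the result; the difference from the paper is only that the paper gives no proof at all, delegating everything to \cite[Theorem 2.11 and Addendum 2.12]{RourkeSanderson82}, whereas you unpack that reference. The two stages you describe --- cutting $P$ and the $P_{i}$ into a convex linear cell complex by means of the bounding hyperplanes of the constituent polytopes, and then subdividing that complex simplicially with no new vertices by coning from a globally ordered vertex set --- are precisely the two stages of the Rourke--Sanderson proof, so what your write-up buys is self-containedness rather than a genuinely new route. Two local comments. First, your parenthetical claim that a simplex contained in $\cl{c}$ lies in $P_{i}$ \emph{precisely} when $c\seq P_{i}$ is false in the ``only if'' direction (a simplex sitting inside a proper face of $\cl{c}$ may lie in $P_{i}$ even though $c\not\seq P_{i}$); fortunately your final paragraph uses only the harmless direction, namely that $c\seq P_{i}$ forces every simplex of the triangulation of $\cl{c}$ into $\Sigma_{i}$, together with the trivial inclusion $|\Sigma_{i}|\seq P_{i}$, so nothing breaks. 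Second, the step you rightly flag as the heart of the matter --- that the ordered coning induces the same triangulation on a face shared by two cells --- deserves one explicit sentence rather than an appeal to what the ordering ``buys'': by induction on dimension, the triangulation assigned to a cell depends only on that cell's face lattice and the restriction of the global vertex order to its vertices, and a common face of two cells is a face of each with the same vertex set, so the two induced triangulations coincide; this gives condition \eqref{d:t2} of Definition~\ref{d:triang} globally, while closure under faces, i.e.\ condition \eqref{d:t1}, is built into the coning construction.
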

\begin{proof}\cite[Theorem 2.11 and Addendum 2.12]{RourkeSanderson82}.
\end{proof}

The Triangulation Lemma  is the fundamental tool in this paper. Recall from  the Introduction that $\Subc{P}$ and $\Subo{P}$ denote the collections of polyhedra and open polyhedra in $P$, respectively.  Here is a first consequence\footnote{Cf.\ \cite[Proposition 2.3.6(d)]{Maunder80}.} of Lemma \ref{l:tl}. 

\begin{cor}\label{c:lattice}For any polyhedron $P\seq\R^{n}$, both $\Subc{P}$ and $\Subo{P}$ are distributive lattices \textup{(}under set-theoretic intersections and unions\textup{)} bounded above by $P$ and below by $\emptyset$.\qed
\end{cor}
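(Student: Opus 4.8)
The plan is to prove Corollary \ref{c:lattice} by reducing everything to a single triangulation supplied by Lemma \ref{l:tl}. First I would establish the claim for $\Subc{P}$, the collection of subpolyhedra of $P$. The boundedness is immediate: $\emptyset$ is a polyhedron contained in $P$, and $P$ itself is a polyhedron, so these serve as $\bot$ and $\top$. For the lattice operations, note that $\Subc{P}$ is ordered by inclusion, so that binary join is set-theoretic union and binary meet is set-theoretic intersection, \emph{provided} these operations keep us inside the class of polyhedra. Closure under finite unions is essentially the definition of a polyhedron (a finite union of polytopes), so if $P_{1},P_{2}\in\Subc{P}$ then $P_{1}\cup P_{2}$ is again a polyhedron contained in $P$. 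Distributivity of $\cup$ over $\cap$ and vice versa is inherited from the ambient distributive lattice of all subsets of $\R^{n}$, so the only genuine point to verify is that $\Subc{P}$ is closed under finite intersections.

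The closure under intersection is where I would invoke the Triangulation Lemma, and I expect this to be the main (indeed the only nontrivial) obstacle. Given $P_{1},P_{2}\in\Subc{P}$, apply Lemma \ref{l:tl} to the family $P,P_{1},P_{2}$ to obtain a triangulation $\Sigma$ of $P$ such that $\Sigma_{1}\df\{\sigma\in\Sigma\mid\sigma\seq P_{1}\}$ and $\Sigma_{2}\df\{\sigma\in\Sigma\mid\sigma\seq P_{2}\}$ triangulate $P_{1}$ and $P_{2}$ respectively. I claim $P_{1}\cap P_{2}=|\Sigma_{1}\cap\Sigma_{2}|$, which exhibits $P_{1}\cap P_{2}$ as a finite union of simplices and hence as a polyhedron. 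The inclusion $\supseteq$ is clear, since any simplex in $\Sigma_{1}\cap\Sigma_{2}$ lies in both $P_{1}$ and $P_{2}$. For $\subseteq$, take $x\in P_{1}\cap P_{2}$ and let $\sigma^{x}$ be its carrier in $\Sigma$, furnished by Lemma \ref{l:interior}. Since $x\in P_{1}=|\Sigma_{1}|$, the point $x$ lies in the relative interior of some simplex of $\Sigma_{1}$; by uniqueness of the carrier that simplex must be $\sigma^{x}$, so $\sigma^{x}\in\Sigma_{1}$, and symmetrically $\sigma^{x}\in\Sigma_{2}$. Thus $x\in\sigma^{x}\seq|\Sigma_{1}\cap\Sigma_{2}|$, completing the equality. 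Therefore $P_{1}\cap P_{2}\in\Subc{P}$.

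Having shown $\Subc{P}$ is a bounded distributive sublattice of the powerset of $P$, I would deduce the statement for $\Subo{P}$ by De Morgan duality. By definition, $O\in\Subo{P}$ exactly when $P\setminus O\in\Subc{P}$, so the complementation map $O\mapsto P\setminus O$ is an order-reversing bijection between $\Subo{P}$ and $\Subc{P}$. Since $\Subc{P}$ is a bounded distributive lattice, its order-dual is as well, and an order-reversing bijection transports the lattice structure: under this correspondence set-theoretic unions in $\Subo{P}$ match set-theoretic intersections in $\Subc{P}$, and intersections match unions, with $\emptyset$ and $P$ swapping roles as the bounds. Concretely, for $O_{1},O_{2}\in\Subo{P}$ we have $P\setminus(O_{1}\cup O_{2})=(P\setminus O_{1})\cap(P\setminus O_{2})\in\Subc{P}$ and $P\setminus(O_{1}\cap O_{2})=(P\setminus O_{1})\cup(P\setminus O_{2})\in\Subc{P}$, so $\Subo{P}$ is closed under finite unions and intersections and contains $\emptyset$ and $P$. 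Distributivity again descends from that of the powerset. This completes the proof, with the load-bearing step being the single application of the Triangulation Lemma to realise the intersection of two subpolyhedra simplicially.
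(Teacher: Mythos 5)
Your proposal is correct and follows essentially the same route as the paper: both reduce the nontrivial point (closure under intersection) to the Triangulation Lemma and show that $\Sigma_{1}\cap\Sigma_{2}$ triangulates $P_{1}\cap P_{2}$, then obtain $\Subo{P}$ by complementation. The only cosmetic difference is that for the inclusion $P_{1}\cap P_{2}\seq|\Sigma_{1}\cap\Sigma_{2}|$ you argue via the carrier $\sigma^{x}$ and Lemma \ref{l:interior}, whereas the paper intersects two simplices $\sigma_{A}\in\Sigma_{A}$, $\sigma_{B}\in\Sigma_{B}$ containing $x$ and uses that subtriangulations are face-closed; both arguments are sound.
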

\begin{proof}Given polyhedra $A,B\seq P$, by Lemma \ref{l:tl} there is a triangulation $\Sigma$ of $P$ along with two subtriangulations $\Sigma_{A},\Sigma_{B}$ with $A=|\Sigma_{A}|$ and $B=|\Sigma_{B}|$. Then the triangulation $\Delta\coloneqq\Sigma_{A}\cap\Sigma_{B}$ triangulates $A\cap B$. Indeed, obviously $|\Delta|\seq A\cap B$. Conversely, if $x\in A\cap B$ then there are $\sigma_{A} \in \Sigma_{A}, \sigma_{B}\in\Sigma_B$ with $x\in\sigma_{A}$ and $x\in\sigma_{B}$. Setting $\tau\coloneqq\sigma_{A}\cap\sigma_{B}$, we have $x\in\tau\neq\emptyset$ and $\tau\in\Sigma$. Since $\tau$ is a face of $\sigma_{A}\in \Sigma_{A}$, $\tau \in \Sigma_{A}.$ Similarly, $\tau\in\Sigma_{B}.$ Hence $\tau\in\Delta$, and $A\cap B\seq |\Delta|$.  Similarly, it is elementary that the triangulation $\nabla\coloneqq\Sigma_{A}\cup\Sigma_{B}$ triangulates $A\cup B$. It is obvious that $P$ and $\emptyset$ are the upper and lower bounds of $\Subc{P}$. The statements about $\Subo{P}$ follow at once by taking complements.
\end{proof}
In Subsection \ref{ss:sub} we shall strengthen Corollary \ref{c:lattice} to the effect that  $\Subo{P}$ is a Heyting subalgebra of the Heyting algebra $\Op{(P)}$.
\subsection{Polyhedra:  dimension theory}\label{ss:polydim}The \emph{\textup{(}affine\textup{)} dimension} of a $d$-simplex  $\sigma=x_0\cdots x_d$ in $\R^{n}$ is the linear-space dimension of the affine subspace of $\R^{n}$ spanned by $\sigma$, and that dimension is precisely $d$ because of the affine independence of the vertices of $\sigma$.
The \emph{\textup{(}affine\textup{)} dimension} of a nonempty polyhedron $P$ in $\R^{n}$ is the maximum of the dimensions of all simplices contained in $P$; if $P=\emptyset$, its  dimension is $-1$. We write $\dim{P}$ for the dimension of $P$. Given a triangulation $\Sigma$ in $\R^{n}$, the \emph{\textup{(}combinatorial\textup{)} dimension} of $\Sigma$ is 
\[
\dim{\Sigma}\df \max{\left\{d\in \N\mid \text{ there exists } \sigma \in \Sigma \text{ such that $\sigma$ is a $d$-simplex.}\right\}}
\]
Again, the dimension of an empty triangulation is $-1$. Everything in the lemma that follows is of course classical.
\begin{lem}\label{l:td}For any polyhedron $\emptyset\neq P\seq\R^{n}$ and every $d\in\N$, the following are equivalent.
\begin{enumerate}[label={\rm (\roman*)}]
\item $\dim{P}=d$.
\item There exists a triangulation $\Sigma$ of $P$ such that $\dim{\Sigma}=d$.
\item All triangulations $\Sigma$ of $P$ satisfy $\dim{\Sigma}=d$. 
\item The Lebesgue covering dimension \cite[Definition 3.1.1]{Pears75} of the topological space $P$ is $d$.
\end{enumerate}
\end{lem}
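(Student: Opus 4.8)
The plan is to reduce the equivalence of (i), (ii) and (iii) to the single identity $\dim{\Sigma}=\dim{|\Sigma|}$, valid for \emph{every} triangulation $\Sigma$, and then to dispatch (iv) by appeal to the classical dimension theory of polyhedra. The easy half of the identity is immediate: a simplex of $\Sigma$ of maximal dimension is a simplex contained in $|\Sigma|$, so $\dim{\Sigma}\leq\dim{|\Sigma|}$.

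The reverse inequality $\dim{|\Sigma|}\leq\dim{\Sigma}$ is the crux, and is where the only real work lies. I would prove the sharper statement that every simplex $\tau$ with $\tau\seq|\Sigma|$ satisfies $\dim{\tau}\leq\dim{\Sigma}$, by a measure-theoretic argument. Let $k\df\dim{\tau}$, let $A$ be the $k$-dimensional affine subspace spanned by $\tau$, and equip $A$ with $k$-dimensional Lebesgue measure $\lambda$. Since $\rint{\tau}$ is a non-empty open subset of $A$, we have $\lambda(\rint{\tau})>0$. On the other hand $\rint{\tau}\seq\tau\seq|\Sigma|=\bigcup_{\sigma\in\Sigma}\sigma$, a \emph{finite} union. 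Assume, for contradiction, that $\dim{\Sigma}<k$; then each $\sigma\in\Sigma$ lies in its affine span $\mathrm{aff}(\sigma)$, whose dimension is $\dim{\sigma}<k$, so $A\not\seq\mathrm{aff}(\sigma)$, and therefore $\mathrm{aff}(\sigma)\cap A$ is a proper affine subspace of $A$, of dimension $\leq k-1$. Hence each $\sigma\cap A\seq\mathrm{aff}(\sigma)\cap A$ is $\lambda$-null, and so is their finite union; this contradicts $\lambda(\rint{\tau})>0$. Therefore $\dim{\Sigma}\geq k$, which proves the claim and hence $\dim{|\Sigma|}=\max_{\tau\seq|\Sigma|}\dim{\tau}\leq\dim{\Sigma}$.

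With the identity $\dim{|\Sigma|}=\dim{\Sigma}$ in hand, the equivalence of (i), (ii) and (iii) is formal. Because $\emptyset\neq P$ is a polyhedron it admits at least one triangulation by the Triangulation Lemma (Lemma \ref{l:tl}, applied with no auxiliary polyhedra $P_i$), and for any such $\Sigma$ we have $\dim{\Sigma}=\dim{P}$. Thus (i) forces every triangulation to have dimension $d$, giving (iii); (iii) gives (ii) since a triangulation exists; and (ii) returns $\dim{P}=\dim{\Sigma}=d$, i.e.\ (i).

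Finally, the equivalence with (iv) is the genuinely classical input, and I would not reprove it from scratch: it is the standard theorem that the Lebesgue covering dimension of a triangulable space equals the combinatorial dimension of any of its triangulations \cite{HurewiczWallman41, Pears75}. Combined with $\dim{P}=\dim{\Sigma}$ just established, this shows (iv) is equivalent to the remaining conditions. The main obstacle throughout is the reverse inequality $\dim{|\Sigma|}\leq\dim{\Sigma}$: the point is that no purely combinatorial, simplex-by-simplex bound suffices, since affinely independent points witnessing a high dimension may be distributed across several low-dimensional simplices, and it is precisely the measure argument above (equivalently, a Baire-category argument) that rules this out.
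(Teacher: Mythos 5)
Your proposal is correct. Note, though, that the paper does not actually argue this lemma: its proof is a two-line citation, attributing (i) $\Leftrightarrow$ (ii) $\Leftrightarrow$ (iii) to ``linear algebra'' plus the Triangulation Lemma, and (i) $\Leftrightarrow$ (iv) to the Lebesgue Covering Theorem. You follow the same skeleton --- reduce everything to the identity $\dim{\Sigma}=\dim{|\Sigma|}$ and defer (iv) to the classical literature exactly as the paper does --- but you supply the one genuinely non-formal step the paper leaves implicit, namely the inequality $\dim{|\Sigma|}\leq\dim{\Sigma}$. Your measure-theoretic argument for it is sound: if $\tau\seq|\Sigma|$ is a $k$-simplex with $k>\dim{\Sigma}$, then $\rint{\tau}$ has positive $k$-dimensional Lebesgue measure in the affine span $A$ of $\tau$, while each $\sigma\cap A$ lies in $\mathrm{aff}(\sigma)\cap A$, a proper (possibly empty) affine subspace of $A$ and hence null, and a finite union of null sets cannot cover $\rint{\tau}$. (The same conclusion can be reached by Baire category, or by the purely linear-algebraic observation that a finite union of proper affine subspaces of $A$ cannot contain a nonempty open subset of $A$ --- presumably the ``linear algebra'' the authors have in mind; your remark that some such non-combinatorial input is unavoidable is well taken.) The remaining deductions, including the existence of at least one triangulation via Lemma \ref{l:tl} with $m=0$, are handled correctly.
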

\begin{proof}With the Triangulation Lemma \ref{l:tl} available, the equivalences (i) $\Leftrightarrow$ (ii) $\Leftrightarrow$ (iii) follow from linear algebra. The equivalence  (i) $\Leftrightarrow$ (iv) is, in essence, the Lebesgue Covering Theorem \cite[Theorem IV 2]{HurewiczWallman41}.
\end{proof}
\section{The locally finite Heyting algebra of a polyhedron}\label{s:onepoly}
Throughout this section we fix $n\in\N$ along with a polyhedron $P\seq\R^{n}$. We shall study the distributive lattice $\Subo{P}$ (Corollary \ref{c:lattice}). We begin by proving that $\Subo{P}$ is in fact a Heyting algebra. We then prove that $\Subo{P}$ is always locally finite.
\subsection{The Heyting algebra of open subpolyhedra}\label{ss:sub}
 Let us record a well-known, elementary observation on relative interiors for which we know no convenient reference.
\begin{lem}\label{l:ri}
Let $\Sigma$ be a triangulation in $\R^n$, let $\tau=x_0\cdots x_d$ be a simplex of $\Sigma$, and let $x\in\rint{\tau}$. Then  no proper face $\sigma \faceneq \tau$ contains $x$. Hence, in particular, the carrier $\sigma^{x}$ of $x$ in $\Sigma$ is the inclusion-smallest simplex of $\Sigma$ containing $x$.
\end{lem}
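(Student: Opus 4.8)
The plan is to prove the statement by analyzing barycentric coordinates directly. Let $\tau = x_0\cdots x_d$ be a simplex of $\Sigma$ with vertex set $V = \{x_0,\ldots,x_d\}$, and let $x\in\rint{\tau}$. By the discussion of barycentric coordinates recalled in Subsection \ref{ss:polybasic}, the point $x$ has a unique representation $x = \sum_{i=0}^{d} r_i x_i$ with $r_i \geq 0$ and $\sum_{i=0}^{d} r_i = 1$, and since $x$ lies in the \emph{relative} interior of $\tau$, every barycentric coordinate is \emph{strictly} positive: $r_i > 0$ for all $i\in\{0,\ldots,d\}$.

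First I would recall that a proper face $\sigma \faceneq \tau$ is, by definition, the convex hull of a proper non-empty subset $W \subsetneq V$ of the vertices of $\tau$. Every point $y\in\sigma$ is then a convex combination $y = \sum_{x_i\in W} s_i x_i$ of the points in $W$, which we may regard as a convex combination over all of $V$ by assigning coefficient $0$ to each vertex $x_i \in V\setminus W$. Since $W$ is a proper subset, at least one vertex of $\tau$ receives coefficient $0$ in this representation. The key tool is the uniqueness of barycentric coordinates: because the vertices of $\tau$ are affinely independent, the coefficients expressing any point of $\tau$ as a convex combination of $x_0,\ldots,x_d$ are uniquely determined. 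Hence if $x\in\sigma$ we could write $x$ as a convex combination of $V$ in which some coordinate vanishes; by uniqueness this must coincide with the representation $\sum_i r_i x_i$ in which every $r_i > 0$, a contradiction. Therefore $x\notin\sigma$ for every proper face $\sigma\faceneq\tau$, which is the first assertion.

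For the second assertion, I would argue as follows. By Lemma \ref{l:interior}, $\tau$ is the unique simplex of $\Sigma$ whose relative interior contains $x$, so $\tau = \sigma^x$ is the carrier of $x$. Now let $\rho\in\Sigma$ be any simplex containing $x$. Again by Lemma \ref{l:interior} applied within $\Sigma$ (or directly, since $x\in\rho$ lies in the relative interior of exactly one face of $\rho$, which is itself a simplex of $\Sigma$ by condition \eqref{d:t1} of Definition \ref{d:triang}), there is a unique face $\rho'\face\rho$ with $x\in\rint{\rho'}$; by uniqueness in Lemma \ref{l:interior} we must have $\rho' = \sigma^x = \tau$. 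Thus $\tau = \rho'\face\rho$, so $\tau$ is a face of $\rho$ and in particular $\tau\seq\rho$. Hence $\tau = \sigma^x$ is contained in every simplex of $\Sigma$ that contains $x$, i.e.\ it is the inclusion-smallest such simplex.

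I expect the only genuine subtlety to be the clean invocation of uniqueness of barycentric coordinates for the first part, and the correct bookkeeping of faces in the second part — specifically, ensuring that the face $\rho'$ of an arbitrary simplex $\rho\ni x$ is again a simplex of $\Sigma$ (which follows from closure under faces, condition \eqref{d:t1}) so that Lemma \ref{l:interior} forces $\rho' = \tau$. Neither step requires computation beyond the affine-independence argument; the proof is essentially a direct consequence of the definitions together with Lemma \ref{l:interior}.
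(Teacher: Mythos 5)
Your proof is correct and takes essentially the same route as the paper: both arguments rest on the affine independence of the vertices of $\tau$ forcing uniqueness of the convex-combination representation of $x$, so that a point of $\rint{\tau}$ (all barycentric coordinates strictly positive) cannot admit a representation with a vanishing coordinate, as membership in a proper face would require. The only differences are cosmetic --- the paper re-derives the uniqueness by an explicit linear-independence computation after first reducing to the codimension-one faces $\rho_i$, whereas you cite the uniqueness of barycentric coordinates already recalled in Subsection \ref{ss:polybasic}; and you spell out the ``hence'' clause via Lemma \ref{l:interior} and closure under faces, which the paper leaves implicit.
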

\begin{proof}There are $r_0,\dots,r_d\in(0,1]$ such that $x=\sum_{i=0}^dr_i x_i$ and $\sum_{i=0}^dr_i=1$. Let $\rho_i \df x_0 \cdots x_{i-1} x_{i+1} \cdots x_d$. Clearly $\rho_i \faceneq \tau$ for each $i\in\{0,\ldots,d\}$, and for each $\sigma \faceneq \tau$ there exists $i\in\{0,\ldots,d\}$ such that $\sigma \face \rho_i$. Hence, if we assume by way of contradiction that $x\in\sigma \faceneq \tau$, then $x\in\rho_i$ for some  $i\in\{0,\dots,d\}$; say $x\in\rho_0$. Then $x=\sum_{i=1}^ds_i x_i$, for some $s_1,\dots,s_d\in[0,1]$ such that $\sum_{i=1}^ds_i=1$. It follows that $r_0=\sum_{i=1}^d(s_i-r_i)$, and so
 \begin{equation*}
   0 = x - x = \sum_{i=1}^ds_i x_i - \sum_{i=0}^dr_i x_i = \sum_{i=1}^d(s_i - r_i) x_i - r_0x_0= \sum_{i=1}^d(s_i - r_i) (x_i-x_0).
 \end{equation*}
 Since $r_0>0$, there must be  $i\in\{1,\dots,d\}$ such that $s_i - r_i\neq0$, contradicting  the affine independence of  $x_0,\dots,x_d$.
\end{proof}
The next lemma is the key fact of this subsection.\footnote{Cf.\ \cite[Proposition 2.3.7]{Maunder80}.}
\begin{lem}
 \label{l:Hneg}
Let $P$ and $Q$ be  polyhedra in $\R^n$ with $Q \subseteq P$, and suppose $\Sigma$ is a triangulation of $P$ such that
\[
\Sigma_{Q}\df\left\{\sigma\in\Sigma\mid \sigma\seq Q\right\}
\]
triangulates $Q$. Define
\begin{itemize}
\item $C\df\cl{(P\setminus Q)}$,
\item $\Sigma_{C} \df \left\{\sigma\in \Sigma \mid \sigma \subseteq C\right\}$, and
\item $\Sigma^*\df \left\{\sigma\in \Sigma \mid \text{There exists } \tau \in \Sigma\setminus \Sigma_Q \text{ such that } \sigma\face\tau\right\}$.

\end{itemize}
 Then 
 \begin{enumerate}[label={\textup{(\arabic*)}}]
 \item\label{l:Hneg1} $\Sigma_{C}=\Sigma^{*}$, and 
 \item\label{l:Hneg2} $|\Sigma_{C}|=|\Sigma^{*}|=C$.
 \end{enumerate}
In particular, $C$ is a polyhedron.
 \end{lem}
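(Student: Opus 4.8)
The plan is to prove the two numbered assertions and then read off that $C$ is a polyhedron. The core identity is $\Sigma_C = \Sigma^*$, after which $|\Sigma_C| = |\Sigma^*| = C$ follows with relatively little extra work, and the final conclusion is immediate since $|\Sigma_C|$ is a finite union of simplices and hence a polyhedron.

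First I would prove $\Sigma^* \seq \Sigma_C$. Take $\sigma \in \Sigma^*$, so there is $\tau \in \Sigma \setminus \Sigma_Q$ with $\sigma \face \tau$. Since $\tau \not\seq Q$, pick a point $y \in \tau \setminus Q$; because $\Sigma_Q$ triangulates $Q$ and $\Sigma$ triangulates $P$, such a point lies in $P \setminus Q \seq C$. The aim is to show every vertex of $\sigma$, and hence all of $\sigma$ (being the convex hull of its vertices, and $C$ being closed), lies in $C = \cl{(P \setminus Q)}$. The cleanest route is to show that the relative interior $\rint{\tau}$ misses $Q$: if some $z \in \rint{\tau}$ were in $Q$, then by Lemma \ref{l:interior} its carrier is $\tau$ itself, and since $Q$ is triangulated by $\Sigma_Q$ the carrier of $z$ would lie in $\Sigma_Q$, forcing $\tau \in \Sigma_Q$, i.e.\ $\tau \seq Q$, a contradiction. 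Thus $\rint{\tau} \seq P \setminus Q$, whence $\tau = \cl{\rint{\tau}} \seq \cl{(P\setminus Q)} = C$, and so every face $\sigma \face \tau$ satisfies $\sigma \seq C$, giving $\sigma \in \Sigma_C$.

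Next I would prove the reverse inclusion $\Sigma_C \seq \Sigma^*$. Let $\sigma \in \Sigma_C$, so $\sigma \seq C = \cl{(P \setminus Q)}$. Pick any $x \in \rint{\sigma}$; then $x \in C$, so $x$ is a limit of points of $P \setminus Q$. I would argue that $\sigma$ cannot lie in $Q$: if $\sigma \seq Q$ then $\sigma$ would be contained in the open-in-$P$ exterior of $P\setminus Q$ only if $\sigma$ avoided $C$, which contradicts $\sigma \seq C$ unless $\sigma$ meets $\cl{(P\setminus Q)}\setminus(P\setminus Q)$; the crisp statement is that since $\Sigma_Q$ is a subtriangulation (a lower set) triangulating $Q$, the union $\bigcup(\Sigma\setminus\Sigma_Q)$ together with faces exhausts the carriers of all points of $C$. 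Concretely, every point of $P\setminus Q$ has its carrier in $\Sigma\setminus\Sigma_Q$; taking a sequence in $P\setminus Q$ converging to $x\in\rint\sigma$ and using finiteness of $\Sigma$, some simplex $\tau\in\Sigma\setminus\Sigma_Q$ contains infinitely many of these points and hence contains $x$ in its closure, so $x\in\tau$, and by Lemma \ref{l:ri} the carrier $\sigma^x=\sigma$ is the smallest simplex containing $x$, giving $\sigma\face\tau$ with $\tau\notin\Sigma_Q$, i.e.\ $\sigma\in\Sigma^*$.

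Having established \ref{l:Hneg1}, the equality $|\Sigma_C| = C$ in \ref{l:Hneg2} follows by double inclusion: $|\Sigma_C| \seq C$ is immediate from the definition of $\Sigma_C$, while $C = \cl{(P\setminus Q)} \seq |\Sigma_C|$ holds because $P\setminus Q \seq |\Sigma^*| = |\Sigma_C|$ (each point of $P\setminus Q$ has carrier in $\Sigma\setminus\Sigma_Q$, hence lies in some $\sigma\in\Sigma^*$) and $|\Sigma_C|$ is closed, being a finite union of closed simplices. Since $|\Sigma^*|=|\Sigma_C|$ by part \ref{l:Hneg1}, both equal $C$. The final clause is then automatic: $C = |\Sigma_C|$ is a finite union of simplices, hence a polyhedron. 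I expect the main obstacle to be the reverse inclusion $\Sigma_C\seq\Sigma^*$, specifically the topological argument that a point of $C$ in the relative interior of a simplex must be a face-point of some non-$Q$ simplex; the convergence-plus-finiteness argument combined with Lemma \ref{l:ri} is the delicate part, and I would take care to invoke that $\Sigma_Q$ is a lower set so that membership of a carrier in $\Sigma_Q$ is equivalent to the corresponding simplex lying in $Q$.
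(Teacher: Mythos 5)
Your proposal is correct and follows essentially the same route as the paper: the two key steps — showing $\rint{\tau}\seq P\setminus Q$ for $\tau\in\Sigma\setminus\Sigma_Q$ via Lemma \ref{l:ri}/carriers, and the convergent-sequence-plus-finiteness argument combined with Lemma \ref{l:ri} for the inclusion $\Sigma_C\seq\Sigma^*$ — are exactly the paper's. The only difference is cosmetic ordering (the paper establishes $|\Sigma^*|=C$ first and deduces $\Sigma^*\seq\Sigma_C$ from it, whereas you prove the set equality $\Sigma_C=\Sigma^*$ directly and then read off $|\Sigma_C|=C$), which changes nothing of substance.
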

\begin{proof}
We first show  that $\Sigma^*$ triangulates $C$, that is: 
 \begin{equation*}\tag{*}\label{eq:star}
  |\Sigma^*| \df \bigcup \Sigma^* = C.
 \end{equation*}
 To show $|\Sigma^*| \seq C$, let $\sigma\in \Sigma^*$, and pick $\tau\in \Sigma \setminus \Sigma_Q$  such that $\sigma \face \tau$. We prove that $\rint{\tau} \seq P \setminus Q$. For, if  $x\in\rint{\tau}$, by Lemma \ref{l:ri} there are no simplices $\sigma\in \Sigma$ such that $x\in \sigma \faceneq \tau$. Then, by definition of triangulation, for any simplex $\rho\in \Sigma$, $x\in\rho$ entails  $\tau\face\rho$. Hence no simplex of $\Sigma_Q$ contains $x$, or equivalently, $x\not\in Q$ and therefore $\rint{\tau}\subseteq P \setminus Q$. 
 
Now, it is clear that any simplex $\tau$ satisfies $\tau=\cl{\rint{\tau}}$. It follows that  $\sigma \seq \tau = \cl{\rint{\tau}} \subseteq \cl{(P \setminus Q)}$, and thus $|\Sigma^*| \subseteq C$ as was to be shown.
 
 \smallskip
Conversely, to show $C \seq |\Sigma^*|$, let $x \in C$. Since $C$ is the closure of $P\setminus Q$ in $\R^n$,  there exists a sequence $\{x_i\}_{i\in\N}\seq P \setminus Q$ that converges to $x$. Clearly the carrier $\sigma^{x_{i}}$ of $x_{i}$ in $\Sigma$ lies in $\Sigma\setminus \Sigma_Q$, for all $i\in\N$. Since $\Sigma \setminus \Sigma_Q$ is finite, there must exist a simplex $\tau \in \Sigma \setminus \Sigma_Q$ containing infinitely many elements of  $\{x_i\}_{i\in\N}$. Then there exists a subsequence  of $\{x_i\}_{i\in\N}$that is contained in $\tau$ and converges to $x$. Since $\tau$ is closed, $x\in\tau$, and therefore $x\in |\Sigma^*|$ as was to be shown.

This establishes \eqref{eq:star}. It now suffices to prove \ref{l:Hneg1}. For the non-trivial inclusion  $\Sigma_{C}\seq\Sigma^{*}$, let $\sigma\in\Sigma$ be such that $\sigma \seq C$, and pick $\beta\in\rint{\sigma}$.  
There is a sequence $\{x_i\}_{i\in\N}\seq P\setminus Q$ converging to $\beta\in\sigma$.  Since each $x_i$ is in some simplex of $\Sigma \setminus \Sigma_Q$ and $\Sigma$ is finite, there must exist a simplex $\tau \in \Sigma\setminus \Sigma_Q$  containing a subsequence of $\{x_i\}_{i\in\N}$ that converges to $\beta$. Since $\tau$ is closed, $\beta\in\tau$. But by Lemma \ref{l:ri}, $\sigma^{\beta}=\sigma$, so that $\sigma \seq \tau$ and $\sigma\in\Sigma^{*}$.
\end{proof}
\begin{cor}
 \label{c:clpol}
 Given polyhedra $Q_1,Q_2$ in $\R^n$, the set $\cl{(Q_2 \setminus Q_1)}$ is a polyhedron.
 \begin{proof}
  Observe that $Q_2 \backslash Q_1 = Q_2 \backslash (Q_1 \cap Q_2)$ and apply Corollary \ref{c:lattice} together with Lemma \ref{l:Hneg} to the set $P\coloneqq \conv{(Q_1 \cup Q_2)}$, which clearly is a polyhedron.
 \end{proof}
\end{cor}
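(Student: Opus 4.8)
The plan is to reduce the statement to a direct application of Lemma~\ref{l:Hneg}, whose technical content---that the closure of the complement of a subpolyhedron inside an ambient polyhedron is again a polyhedron---does all the real work. The only thing this corollary adds is the handling of a pair $Q_1,Q_2$ in general position, rather than a nested pair as required by Lemma~\ref{l:Hneg}.

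First I would record the set-theoretic identity $Q_2\setminus Q_1=Q_2\setminus(Q_1\cap Q_2)$: both sides consist exactly of the points of $Q_2$ lying outside $Q_1$, since for $x\in Q_2$ the conditions $x\notin Q_1$ and $x\notin Q_1\cap Q_2$ coincide. Writing $Q\df Q_1\cap Q_2$, Corollary~\ref{c:lattice} guarantees that $Q$ is a polyhedron, and by construction $Q\seq Q_2$. Thus the original difference has been replaced by $Q_2\setminus Q$ with $Q$ a genuine subpolyhedron of $Q_2$.

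Next I would invoke the Triangulation Lemma~\ref{l:tl}, with ambient polyhedron $Q_2$ and the single subpolyhedron $Q\seq Q_2$, to obtain a triangulation $\Sigma$ of $Q_2$ for which $\Sigma_Q\df\{\sigma\in\Sigma\mid\sigma\seq Q\}$ triangulates $Q$. This is precisely the hypothesis needed to apply Lemma~\ref{l:Hneg} with $P\df Q_2$. That lemma then yields that $C\df\cl{(Q_2\setminus Q)}$ is a polyhedron (indeed $C=|\Sigma_C|=|\Sigma^{*}|$). Since $\cl{(Q_2\setminus Q)}=\cl{(Q_2\setminus Q_1)}$ by the first step, the corollary follows.

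As for the main obstacle: there is essentially none beyond correctly matching the hypotheses of Lemma~\ref{l:Hneg}. The single point demanding a moment's care is the reduction to a nested pair, which is exactly what lets us meet the constraint $Q\seq P$ of the lemma; once that identity is in hand the conclusion is immediate. One could equally apply the lemma with $P\df\conv{(Q_1\cup Q_2)}$, a polyhedron containing both $Q_1$ and $Q_2$, but the choice $P=Q_2$ is the most economical, as it already contains $Q=Q_1\cap Q_2$.
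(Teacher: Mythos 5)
Your proposal is correct and follows essentially the same route as the paper: the identity $Q_2\setminus Q_1=Q_2\setminus(Q_1\cap Q_2)$, Corollary \ref{c:lattice} to see that $Q_1\cap Q_2$ is a polyhedron, and Lemma \ref{l:Hneg} to conclude. The only (harmless) difference is that you apply Lemma \ref{l:Hneg} with ambient polyhedron $Q_2$ rather than $\conv{(Q_1\cup Q_2)}$, and you make explicit the appeal to the Triangulation Lemma \ref{l:tl} needed to satisfy its hypotheses, which the paper leaves implicit.
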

\begin{cor}\label{c:heytstruct}
 The lattice $\Subc{P}$ of is closed under the co-Heyting implication \eqref{e:ci} of $\K{(P)}$. Dually, the lattice $\Subo{P}$ is closed under the Heyting implication \eqref{e:hi} of $\Op{(P)}$.
\end{cor}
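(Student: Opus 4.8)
The plan is to reduce the entire statement to Corollary \ref{c:clpol}, which already carries all of the geometric content (ultimately the Triangulation Lemma, via Lemma \ref{l:Hneg}). For the co-Heyting half, let $C, D \in \Subc{P}$. By formula \eqref{e:ci} the co-Heyting implication computed in $\K(P)$ is $C \leftarrow D = \cl{(C \setminus D)}$. Corollary \ref{c:clpol} (applied with $Q_2 = C$ and $Q_1 = D$) tells us this is a polyhedron, and since $\cl{(C \setminus D)} \seq C \seq P$ it is a subpolyhedron of $P$, i.e.\ a member of $\Subc{P}$. Thus $\Subc{P}$ is closed under $\leftarrow$, with no further work required.

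For the Heyting half I would pass to complements. By the very definition of $\Subo{P}$, the map $O \mapsto P \setminus O$ is an order-reversing bijection $\Subo{P} \to \Subc{P}$. Given $U, V \in \Subo{P}$, set $C \df P \setminus U$ and $D \df P \setminus V$, both in $\Subc{P}$. Since $P$ is closed in $\R^{n}$, closure in the subspace $P$ coincides with $\cl$, so that $\inte_P(P \setminus S) = P \setminus \cl{S}$ for every $S \seq P$; here $\inte_P$ denotes the interior operator of $\Op(P)$ appearing in \eqref{e:hi}. Using \eqref{e:hi} together with this identity and the De Morgan laws, I would compute
\[
P \setminus (U \to V) = P \setminus \inte_P\!\big((P \setminus U) \cup V\big) = \cl{\big(P \setminus ((P \setminus U) \cup V)\big)} = \cl{(D \setminus C)}.
\]
The right-hand side is exactly the co-Heyting implication $D \leftarrow C$ computed in $\K(P)$, which by the first half lies in $\Subc{P}$. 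Hence $P \setminus (U \to V) \in \Subc{P}$, and this is precisely the assertion that $U \to V \in \Subo{P}$.

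I do not anticipate any real obstacle: the substantive difficulty has been front-loaded into Lemma \ref{l:Hneg} and Corollary \ref{c:clpol}, leaving this corollary a formal consequence. The only point demanding a little care is the bookkeeping between interior and closure taken in the subspace $P$ versus the ambient $\R^{n}$, and making the identity $\inte_P(P \setminus S) = P \setminus \cl{S}$ explicit (valid because $P$ is closed in $\R^{n}$) removes any ambiguity. One could instead argue abstractly — that complementation transports the Heyting structure of $\Op(P)$ onto the co-Heyting structure of $\K(P)$, so that closure under one operation follows from closure under the other — but the direct computation above is shorter and entirely self-contained.
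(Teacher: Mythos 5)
Your proof is correct and follows the same route as the paper: the co-Heyting half is an immediate consequence of Corollary \ref{c:clpol}, and the Heyting half is obtained by dualising through complementation in $P$. The paper leaves the dualisation implicit, whereas you spell out the computation $P\setminus(U\to V)=\cl{\big((P\setminus V)\setminus(P\setminus U)\big)}$ explicitly, which is a harmless (indeed welcome) elaboration of the same argument.
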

\begin{proof}The first statement is an immediate consequence of Corollary \ref{c:clpol}. The second statement follows by dualising.
\end{proof}
\subsection{Local finiteness through triangulations}\label{ss:lf} Having established that $\Subo{P}$ is a Heyting subalgebra of $\Op{(X)}$, we infer an important structural property of $\Subo{P}$, local finiteness. For this, we first identify the class of subalgebras of $\Subo{P}$ that corresponds to triangulations of $P$. These algebras will have a central r\^{o}le in the sequel, too.
\begin{defn}[$\Sigma$-definable polyhedra]\label{d:hasigma}For any triangulation $\Sigma$ in $\R^{n}$, we write
$\PCc{(\Sigma)}$
for the sublattice of $\K{(|\Sigma|)}$ generated by $\Sigma$, and $\PCo{(\Sigma)}$ for the sublattice of $\Op{(|\Sigma|)}$ generated by $\{|\Sigma|\setminus C\mid C \in \PCc{(\Sigma)} \}$. We call $\PCc{(\Sigma)}$ the set of \emph{$\Sigma$-definable polyedra}, and  $\PCo{(\Sigma)}$ the set of \emph{$\Sigma$-definable open polyedra}. 
\end{defn}
Note that we have
\[
\PCc{(\Sigma)}=\{C\seq \R^{n}\mid C \text{ is the union of some subset of } \Sigma\}.
\]
\begin{lem}\label{l:triangheyt}For any triangulation $\Sigma$ of $P$, $\PCc{(\Sigma)}$ is a co-Heyting subalgebra of $\Subc{P}$. Dually,  $\PCo{(\Sigma)}$ is a Heyting subalgebra of $\Subo{P}$.
\end{lem}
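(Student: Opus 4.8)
The plan is to prove the co-Heyting statement, that $\PCc{(\Sigma)}$ is a co-Heyting subalgebra of $\Subc{P}$, and then obtain the Heyting statement for $\PCo{(\Sigma)}$ by dualising, exactly as the paper's remarks suggest. By the explicit description $\PCc{(\Sigma)}=\{C\seq\R^{n}\mid C\text{ is the union of some subset of }\Sigma\}$, the lattice operations are transparent: meets and joins of unions-of-simplices are again unions-of-simplices (for meets one uses condition \eqref{d:t2} of Definition \ref{d:triang}, so that the intersection of two simplices is either empty or a common face, hence still a member of $\Sigma$), so $\PCc{(\Sigma)}$ is certainly a sublattice of $\Subc{P}$ containing $\top=|\Sigma|$ and $\bot=\emptyset$. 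The content of the lemma is therefore that this sublattice is closed under the co-Heyting implication $\leftarrow$ of $\K{(|\Sigma|)}$ given by \eqref{e:ci}, namely $C\leftarrow D=\cl{(C\setminus D)}$.

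The crux is thus to show that for $C,D\in\PCc{(\Sigma)}$ the closed set $\cl{(C\setminus D)}$ is again a union of simplices of $\Sigma$. First I would reduce to computing $\cl{(C\setminus D)}$ in terms of carriers. Writing $C=\bigcup\Gamma$ and $D=\bigcup\Delta$ for subsets $\Gamma,\Delta\seq\Sigma$, the set $C\setminus D$ is a union of relative interiors of simplices of $\Sigma$: by Lemma \ref{l:interior} every point of $|\Sigma|$ lies in the relative interior of a unique carrier, and one checks that $x\in C\setminus D$ forces the carrier $\sigma^{x}$ to lie in $C$ (using Lemma \ref{l:ri}, since $\sigma^x$ is the smallest simplex containing $x$ and $C$ is a union of simplices closed under faces of the simplices it contains) but not to be contained in $D$. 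Hence $C\setminus D=\bigcup\{\rint{\sigma}\mid \sigma\in\Sigma,\ \sigma\seq C,\ \sigma\not\seq D\}$. Taking closures and using $\cl{\rint{\sigma}}=\sigma$ together with finiteness of $\Sigma$ (so the closure of a finite union is the union of closures), we get
\[
\cl{(C\setminus D)}=\bigcup\left\{\sigma\in\Sigma\mid \sigma\seq C,\ \sigma\not\seq D\right\},
\]
which is manifestly a union of simplices of $\Sigma$, i.e.\ a member of $\PCc{(\Sigma)}$. This is precisely the mechanism already deployed in Lemma \ref{l:Hneg}, where $\cl{(P\setminus Q)}=|\Sigma^{*}|$ was identified with the union of a subtriangulation; indeed the present lemma can be seen as the relative version of that computation, and I would invoke Lemma \ref{l:Hneg} and the carrier analysis of Lemma \ref{l:ri} rather than re-running the limit arguments from scratch.

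The main obstacle, and the one point deserving care, is the description of $C\setminus D$ as a union of relative interiors, together with the interchange of closure and union. The finiteness of $\Sigma$ makes the latter immediate, but the former requires the face-closure structure: one must verify that if a point $x$ of carrier $\sigma^x$ lies in $C$ then in fact $\sigma^x\seq C$ (so that removing $D$ cannot strand a point inside a simplex whose closure re-enters $D$), and symmetrically that $\sigma^x\not\seq D$ when $x\notin D$. Both follow from Lemma \ref{l:ri} identifying $\sigma^x$ as the inclusion-smallest simplex through $x$, combined with the fact that membership in $\PCc{(\Sigma)}$ is membership in a union of full simplices. Once this is in hand the displayed formula for $\cl{(C\setminus D)}$ closes the argument. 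Finally, the Heyting statement for $\PCo{(\Sigma)}$ follows formally: complementation within $|\Sigma|$ is a dual lattice isomorphism carrying $\PCc{(\Sigma)}$ onto $\PCo{(\Sigma)}$ and the co-Heyting implication \eqref{e:ci} onto the Heyting implication \eqref{e:hi}, so closure of $\PCc{(\Sigma)}$ under $\leftarrow$ transports to closure of $\PCo{(\Sigma)}$ under $\to$, and Corollary \ref{c:heytstruct} guarantees these are computed inside $\Subc{P}$ and $\Subo{P}$ respectively.
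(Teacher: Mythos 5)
Your proof is correct and takes essentially the same route as the paper: the paper's own argument likewise reduces the whole lemma to the fact that $\cl{(C\setminus D)}$ is again a union of simplices of $\Sigma$, obtained by applying Lemma \ref{l:Hneg} (together with Corollary \ref{c:clpol}) to $C$ triangulated by its simplices and $Q\coloneqq C\cap D$. Your explicit identity $\cl{(C\setminus D)}=\bigcup\left\{\sigma\in\Sigma\mid\sigma\seq C,\ \sigma\not\seq D\right\}$ is exactly the identification $\cl{(C\setminus D)}=|\Sigma^{*}|$ of that lemma, merely rederived via the partition of $|\Sigma|$ into relative interiors (Lemmas \ref{l:interior} and \ref{l:ri}) instead of the sequence argument, so no genuinely new ideas are involved.
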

\begin{proof}For any $\emptyset\neq C,D\in\PCc{(\Sigma)}$, it follows immediately by the assumptions that  $C$ and $D$ are triangulated by the collection of simplices of $\Sigma$ contained in $C$ and $D$, respectively. Hence $C\leftarrow D\df\cl{(C\setminus D)}=|\Sigma^{*}|=\bigcup\Sigma^{*}$ by Corollary \ref{c:clpol} and Lemma \ref{l:Hneg}, where $\Sigma^{*}$ is the appropriate subset of $\Sigma$ as per Lemma \ref{l:Hneg}. Thus $C\leftarrow D\in\PCc{(\Sigma)}$.
\end{proof}

\begin{cor}\label{c:lf1}Let $H$ be the co-Heyting subalgebra of $\Subc{P}$ generated by finitely many polyhedra $P_{1},\ldots, P_{m}\seq  P$. Let further $\Sigma$ be any triangulation of $P$ that triangulates each $P_{i}$, $i\in\{1,\ldots,m\}$. Then $H$ is a co-Heyting subalgebra of $\PCc{(\Sigma)}$. In particular, $H$ is finite. Dually for the Heyting subalgebra of $\Subo{P}$ generated by $P\setminus P_{i}$, $i\in\{1,\ldots,m\}$.
\end{cor}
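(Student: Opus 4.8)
The plan is to prove Corollary \ref{c:lf1} by exhibiting $H$ as a subalgebra of $\PCc{(\Sigma)}$ and then arguing that the latter is finite. First I would invoke the Triangulation Lemma \ref{l:tl} to guarantee that a triangulation $\Sigma$ of $P$ triangulating each $P_i$ actually exists, so the hypothesis is not vacuous. The core observation is that each generator $P_i$ belongs to $\PCc{(\Sigma)}$: since $\Sigma$ triangulates $P_i$, the collection $\{\sigma\in\Sigma\mid\sigma\seq P_i\}$ has union exactly $P_i$, and by the explicit description
\[
\PCc{(\Sigma)}=\{C\seq\R^{n}\mid C\text{ is the union of some subset of }\Sigma\}
\]
recorded after Definition \ref{d:hasigma}, this says precisely $P_i\in\PCc{(\Sigma)}$.

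Next I would use Lemma \ref{l:triangheyt}, which tells us $\PCc{(\Sigma)}$ is a co-Heyting subalgebra of $\Subc{P}$ — in particular it is closed under the lattice operations $\wedge,\vee$ and the co-Heyting implication $\leftarrow$, and it contains $\top=P$ and $\bot=\emptyset$. Since $H$ is by definition the smallest co-Heyting subalgebra of $\Subc{P}$ containing $P_1,\ldots,P_m$, and $\PCc{(\Sigma)}$ is a co-Heyting subalgebra of $\Subc{P}$ containing all the $P_i$, minimality forces $H\seq\PCc{(\Sigma)}$. As $H$ is closed under the same operations, it is a co-Heyting subalgebra of $\PCc{(\Sigma)}$.

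Finiteness is then immediate: the description above shows $\PCc{(\Sigma)}$ is in bijection with a subset of the powerset of the finite set $\Sigma$, so $|\PCc{(\Sigma)}|\leq 2^{|\Sigma|}<\infty$; hence its subalgebra $H$ is finite. Finally, the dual statement for the Heyting subalgebra of $\Subo{P}$ generated by the open polyhedra $P\setminus P_i$ follows by taking complements, exactly as the complementation duality between $\Subc{P}$ and $\Subo{P}$ (and between $\PCc{(\Sigma)}$ and $\PCo{(\Sigma)}$) was used in Corollary \ref{c:lattice} and Lemma \ref{l:triangheyt}.

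I do not expect a genuine obstacle here, since all the substantive geometric work — that $\cl{(C\setminus D)}$ stays inside $\PCc{(\Sigma)}$, i.e.\ closure under $\leftarrow$ — has already been discharged in Lemma \ref{l:Hneg} and Lemma \ref{l:triangheyt}. The only point requiring mild care is the minimality argument: one must make sure $H$ is generated \emph{as a co-Heyting subalgebra} (so that closure under $\leftarrow$, not merely under $\wedge,\vee$, is part of the generation), which is what licenses the inclusion $H\seq\PCc{(\Sigma)}$; this is precisely why Lemma \ref{l:triangheyt} establishing co-Heyting (and not just sublattice) closure of $\PCc{(\Sigma)}$ is the load-bearing input.
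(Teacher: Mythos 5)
Your proposal is correct and follows essentially the same route as the paper: the generators $P_i$ lie in $\PCc{(\Sigma)}$ because $\Sigma$ triangulates them, $\PCc{(\Sigma)}$ is a co-Heyting subalgebra of $\Subc{P}$ (Lemma \ref{l:triangheyt}), and finiteness follows from $|\PCc{(\Sigma)}|\leq 2^{|\Sigma|}$. If anything, your appeal to minimality of the generated subalgebra is a slight streamlining of the paper's proof, which instead re-verifies closure under $\leftarrow$ for elements of the generated sublattice via Lemma \ref{l:Hneg}.
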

\begin{proof} Each $P_{i}$ is the union of those simplices of $\Sigma$ that are contained in $P_{i}$, by assumption. It follows  that  the distributive lattice $L$ generated in $\Subc{P}$  by $\{P_{1},\ldots,P_{m}\}$ is entirely contained in $\PCc{(\Sigma)}$. Now, if $C,D\in L$, $C\leftarrow D\df\cl{(C\setminus D)}=|\Sigma^{*}|=\bigcup\Sigma^{*}$ by Corollary \ref{c:clpol} and Lemma \ref{l:Hneg}, where $\Sigma^{*}$ is the appropriate subset of $\Sigma$ as per Lemma \ref{l:Hneg}. Hence $C\leftarrow D\in\PCc{(\Sigma)}$, as was to be shown.
\end{proof}
\begin{cor}\label{c:lf2}The Heyting algebra $\Subo{P}$ is locally finite, and so is the co-Heyting algebra $\Subc{P}$.
\end{cor}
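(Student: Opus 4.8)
The plan is to deduce the statement immediately from Corollary \ref{c:lf1}, which has already carried out the essential work. Recall that an algebra is locally finite precisely when each of its finitely generated subalgebras is finite. Hence I must show that any co-Heyting subalgebra of $\Subc{P}$ generated by finitely many of its elements is finite, and dually for the finitely generated Heyting subalgebras of $\Subo{P}$.

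First I would observe that every element of $\Subc{P}$ is, by definition, a subpolyhedron of $P$. Consequently any finite generating set for a subalgebra of $\Subc{P}$ consists of finitely many polyhedra $P_{1},\ldots,P_{m}\seq P$, so the co-Heyting subalgebra it generates is exactly an algebra $H$ of the form treated in Corollary \ref{c:lf1}. Invoking the Triangulation Lemma \ref{l:tl}, I would fix a triangulation $\Sigma$ of $P$ that simultaneously triangulates each $P_{i}$; Corollary \ref{c:lf1} then places $H$ inside $\PCc{(\Sigma)}$.

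It remains only to record that $\PCc{(\Sigma)}$ is finite, which is immediate from the identification noted just after Definition \ref{d:hasigma}: its members are precisely the unions of subsets of the finite set $\Sigma$, so that $|\PCc{(\Sigma)}|\leq 2^{|\Sigma|}$. A subalgebra of a finite algebra is finite, whence $H$ is finite, establishing local finiteness of $\Subc{P}$. The corresponding statement for $\Subo{P}$ follows from the dual half of Corollary \ref{c:lf1}, or equivalently by transporting the argument across the order anti-isomorphism $C\mapsto P\setminus C$ between $\Subc{P}$ and $\Subo{P}$, which interchanges the co-Heyting and Heyting structures.

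There is no genuine obstacle here: all the substance resides in Corollaries \ref{c:lf1} and \ref{c:clpol}, and ultimately in the Triangulation Lemma \ref{l:tl}, which guarantees that finitely many subpolyhedra can be captured by a single finite triangulation. The one point deserving a moment's care is the observation that a finite generating set in $\Subc{P}$ automatically consists of polyhedra, so that Corollary \ref{c:lf1} applies verbatim.
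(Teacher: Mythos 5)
Your proposal is correct and follows essentially the same route as the paper: the paper's proof of Corollary \ref{c:lf2} is exactly ``Corollary \ref{c:lf1} together with the Triangulation Lemma \ref{l:tl}, then dualise,'' and you have merely spelled out the intermediate steps (that a finite generating set consists of polyhedra, and that $\PCc{(\Sigma)}$ is finite). Nothing is missing.
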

\begin{proof}The second statement is Corollary \ref{c:lf1} together with the Triangulation Lemma \ref{l:tl}. The first statement follows by dualising.
\end{proof}
\section{Topological dimension and bounded depth}\label{s:dim}
The aim of this section is to prove:
\begin{thm}\label{t:dim}For any polyhedron $\emptyset\neq P\seq\R^{n}$ and every $d\in\N$, the following are equivalent.
\begin{enumerate}[label={\rm (\roman*)}]
\item $\dim{P}=d$.
\item The Heyting algebra $\Subo{P}$ satisfies  the equation $\BD_{d}=\top$, and fails each equation $\BD_{d'}=\top$ for each integer $0\leq d'<d$.
\end{enumerate}
\end{thm}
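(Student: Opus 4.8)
The plan is to reduce the statement to a combinatorial fact about triangulations, using local finiteness to transfer the question into the finite subalgebras $\PCo{(\Sigma)}$, and then to read off the depth of those algebras from finite Esakia duality. Since (ii) asserts that $\Subo{P}$ validates $\BD_{d}=\top$ but none of the equations $\BD_{d'}=\top$ with $d'<d$, and since the bounded-depth equations form a decreasing hierarchy (so that $\BD_{d'}=\top$ semantically entails $\BD_{d}=\top$ for $d'\leq d$), it suffices to prove the single biconditional
\[
\Subo{P}\models\BD_{d}=\top \iff \dim{P}\leq d.
\]
Indeed, granting this, condition (ii) unwinds to ``$\dim{P}\leq d$ and $\dim{P}>d'$ for every $0\leq d'<d$'', i.e.\ to $\dim{P}=d$, which is (i).

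First I would reduce satisfaction in $\Subo{P}$ to satisfaction in the algebras $\PCo{(\Sigma)}$. As $\BD_{d}=\top$ is a single equation, it involves only finitely many elements, so $\Subo{P}\models\BD_{d}=\top$ holds iff every finitely generated subalgebra of $\Subo{P}$ satisfies it. By Corollary \ref{c:lf1} together with the Triangulation Lemma \ref{l:tl}, every finitely generated subalgebra embeds into some $\PCo{(\Sigma)}$ with $\Sigma$ a triangulation of $P$, while conversely each $\PCo{(\Sigma)}$ is itself such a subalgebra by Lemma \ref{l:triangheyt}. Hence $\Subo{P}\models\BD_{d}=\top$ iff $\PCo{(\Sigma)}\models\BD_{d}=\top$ for every triangulation $\Sigma$ of $P$.

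Next I would identify the spectrum of $\PCo{(\Sigma)}$. Regarding $\Sigma$ as a poset under the face relation $\face$, I claim the co-Heyting algebra $\PCc{(\Sigma)}$ is isomorphic to $\Lo{(\Sigma)}$ via the map sending a subcomplex (equivalently, a lower set of $\Sigma$) to its support; dually $\PCo{(\Sigma)}\cong\Up{(\Sigma)}$ as Heyting algebras. By finite Esakia duality (Lemma \ref{l:esakia}) this yields $\Spec{\PCo{(\Sigma)}}\cong\Sigma$, whence $\depth{\Spec{\PCo{(\Sigma)}}}=\depth{\Sigma}$. A short computation then gives $\depth{\Sigma}=\dim{\Sigma}$: any chain $\sigma_{0}\faceneq\cdots\faceneq\sigma_{k}$ forces $\dim{\sigma_{i}}$ to increase strictly, so that $k\leq\dim{\Sigma}$, while a simplex of top dimension supplies a full flag of faces realizing a chain of cardinality $\dim{\Sigma}+1$. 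Combining these with Lemma \ref{l:bd} (in its inequality form, obtained from the stated exact-depth version together with the monotonicity just noted) gives $\PCo{(\Sigma)}\models\BD_{d}=\top$ iff $\dim{\Sigma}\leq d$. Since every triangulation of $P$ has $\dim{\Sigma}=\dim{P}$ by Lemma \ref{l:td}, the displayed biconditional follows, and with it the theorem.

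The step I expect to be the main obstacle is the isomorphism $\PCc{(\Sigma)}\cong\Lo{(\Sigma)}$ (equivalently $\PCo{(\Sigma)}\cong\Up{(\Sigma)}$): one must verify that distinct subcomplexes have distinct supports and that unions and intersections of subcomplexes correspond to unions and intersections of supports. Surjectivity onto $\PCc{(\Sigma)}$ uses that a face is a subset of its simplex, so that the union of an arbitrary subset of $\Sigma$ equals the union of its downward closure, matching the description of $\PCc{(\Sigma)}$ recorded after Definition \ref{d:hasigma}. Injectivity and preservation of meets follow from Lemma \ref{l:interior}: since each point of $|\Sigma|$ lies in the relative interior of a unique carrier, a subcomplex is recovered from its support as the set of simplices whose relative interior meets it. Everything else is bookkeeping: the reduction to finite subalgebras is routine given Corollary \ref{c:lf1}, and the passage from poset depth to the bounded-depth equations is exactly Lemma \ref{l:bd}.
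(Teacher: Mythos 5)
Your proposal is correct and follows essentially the same route as the paper: reduce to the finite subalgebras $\PCo{(\Sigma)}$ via the Triangulation Lemma and Corollary \ref{c:lf1}, identify the dual frame of $\PCo{(\Sigma)}$ with the face poset $\Sigma$ (the paper's Lemma \ref{l:updown}, whose surjectivity argument is exactly your subcomplex--support correspondence), observe that $\depth{\Sigma}=\dim{\Sigma}$ (the paper's Lemma \ref{l:ji}\ref{l:ji3}), and conclude via Lemma \ref{l:bd} and Lemma \ref{l:td}. Your repackaging of condition (ii) as the single biconditional $\Subo{P}\models\BD_{d}=\top\iff\dim{P}\leq d$ is only a cosmetic variant of the paper's contrapositive treatment of (ii) $\Rightarrow$ (i).
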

We deduce the theorem from a combinatorial counterpart of the result for triangulations, Lemma \ref{l:ji} below. In turn, this lemma will follow from the analysis of frames arising from triangulations that we carry out first.
\subsection{Frames of algebras of definable polyhedra.}\label{ss:frames} Consider a triangulation $\Sigma$, and the finite Heyting algebra $\PCo{(\Sigma)}$. We shall henceforth  regard $\Sigma$ as a poset under the inclusion order, whenever convenient. Note that the inclusion order of $\Sigma$ is the same thing as the ``face order" $\sigma\face\tau$ we have been using above: since $\Sigma$ is a triangulation (as opposed to a mere set of simplices),  $\sigma\seq\tau$ implies $\sigma\face\tau$, and the converse implication is obvious. Indeed, the poset $\Sigma$ is a much-studied object in combinatorics, where it is known as the \emph{face poset} of a simplicial complex. We next show what r\^ole $\Sigma$ plays for the Heyting algebra  $\PCo{(\Sigma)}$, by establishing an isomorphism of Heyting algebras $\Up{\Sigma}\cong\PCo{(\Sigma)}$; equivalently, through  Esakia duality (Lemma \ref{l:esakia}), the face poset $\Sigma$ is isomorphic to the dual  frame of the algebra 
 $\PCo{(\Sigma)}$. This result is technically important, because the prime filters of  $\PCo{(\Sigma)}$, or what amounts to the same, its join-irreducible elements, are  somewhat harder to visualise than the simplices of $\Sigma$.  There are  corresponding results for the co-Heyting algebra $\PCc{(\Sigma)}$ which we do not spell out as we do not need them for the proof of our main result.

We recall the notion of open star of a simplex, cf.\ e.g.\ \cite[Definition 2.4.2]{Maunder80}.
\begin{defn}[Open star]\label{d:ostar}For $\Sigma$ a triangulation,
 the \emph{open star} of   $\sigma\in\Sigma$ is the subset of $|\Sigma|$ defined by
 \begin{equation*}
  \ostar{\sigma} \coloneqq \bigcup_{\sigma\seq\tau\in\Sigma} \rint{\tau}.
 \end{equation*} 
\end{defn}
Although not immediately obvious, it is classical (see e.g.\ \cite[Proposition 2.4.3]{Maunder80}) that the open star of any simplex is an open subpolyhedron, that is, for each $\sigma\in\Sigma$
\begin{align}\label{eq:starisopen}
\ostar{\sigma} \in \PCo{(\Sigma)}.
\end{align}
Indeed, set
\[
K_\sigma\coloneqq\{\tau\in\Sigma\mid \sigma \not \seq \tau\}.
\]
Then $K_\sigma$ is clearly a subtriangulation of $\Sigma$,  $|K_\sigma|$ is a subpolyhedron of $|\Sigma|$, and thus $O\coloneqq|\Sigma|\setminus |K_\sigma| \in \PCo{(|\Sigma|)}$; but one can show using Lemma \ref{l:interior} that $O=\ostar{\sigma}$, so  \eqref{eq:starisopen} holds. 

We now define a function
\begin{align}
\gammaup \colon \Up{\Sigma} &\longrightarrow\PCo{(\Sigma)}\label{eq:gup}\\
U\in\Up{\Sigma} &\longmapsto \bigcup_{\sigma\in U}\rint{\sigma}.\nonumber
\end{align}
To see that $\gammaup$ is well-defined, use the fact that $\Sigma$ is a finite poset to list the minimal elements $\sigma_1,\ldots,\sigma_u$ of the upper set  $U$. Then
\[
U=\uparrow{}\sigma_1\cup\cdots \cup\uparrow{}\sigma_u,
\]
so that 
\begin{align}
\gammaup(U)&=\gammaup(\uparrow{}\sigma_1)\cup\cdots \cup \gammaup(\uparrow{}\sigma_u) \nonumber\\
&=\left(\bigcup_{\sigma_1\subseteq\tau\in\Sigma}\rint{\tau}\right)\cup\cdots \cup \left(\bigcup_{\sigma_u\subseteq\tau\in\Sigma}\rint{\tau}\right) \nonumber\\
&=\ostar{\sigma_1}\cup\cdots\cup\ostar{\sigma_u}.\nonumber
\end{align}
Thus $\gammaup(U)$ is a union of open stars and hence a member of $\PCo{(\Sigma)}$.

\begin{lem}\label{l:updown}The map $\gammaup$ of  \eqref{eq:gup} is an   isomorphism of the finite Heyting algebras $\Up{\Sigma}$ and $\PCo{(\Sigma)}$.
\end{lem}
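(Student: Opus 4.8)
The plan is to prove that $\gammaup$ is an isomorphism of the underlying bounded lattices and then appeal to the standard fact that any lattice isomorphism between two Heyting algebras is automatically a Heyting isomorphism (the implication $a\to b$ being the join of $\{c\mid a\wedge c\leq b\}$, hence definable from the lattice order alone). Both $\Up{\Sigma}$ and $\PCo{(\Sigma)}$ are finite Heyting algebras --- the latter by Lemma~\ref{l:triangheyt} --- so this reduction is legitimate, and it spares us any direct manipulation of the Heyting implication. The single combinatorial engine behind everything is Lemma~\ref{l:interior}: the relative interiors $\{\rint{\sigma}\mid\sigma\in\Sigma\}$ form a partition of $|\Sigma|$ into non-empty cells, each $x\in|\Sigma|$ lying in $\rint{\sigma^{x}}$ and in no other.

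First I would dispatch injectivity together with order-reflection in one stroke. The map $\gammaup$ is visibly order-preserving, since $U\seq U'$ forces $\bigcup_{\sigma\in U}\rint{\sigma}\seq\bigcup_{\sigma\in U'}\rint{\sigma}$. Conversely, suppose $U\not\seq U'$ and pick $\sigma\in U\setminus U'$. Then $\rint{\sigma}\seq\gammaup(U)$, whereas by disjointness of relative interiors $\rint{\sigma}$ meets none of the cells $\rint{\tau}$ with $\tau\in U'$, so $\rint{\sigma}\cap\gammaup(U')=\emptyset$ and hence $\gammaup(U)\not\seq\gammaup(U')$. This shows simultaneously that $\gammaup$ reflects the order (so is injective) and is an order embedding.

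Next comes surjectivity, which I expect to be the only step requiring a genuine argument, as it rests on unwinding the definition of $\PCo{(\Sigma)}$. Because $\PCc{(\Sigma)}$ is a bounded sublattice of $\K{(|\Sigma|)}$ containing $\emptyset$ and $|\Sigma|$, De Morgan's laws show that the family $\{|\Sigma|\setminus C\mid C\in\PCc{(\Sigma)}\}$ is already closed under finite unions and intersections and contains $\bot,\top$; therefore $\PCo{(\Sigma)}=\{|\Sigma|\setminus C\mid C\in\PCc{(\Sigma)}\}$. Now fix $O=|\Sigma|\setminus C$ with $C\in\PCc{(\Sigma)}$. Since $C$ is a union of simplices of $\Sigma$ and every simplex contains all of its faces, $C=|L|$ for the lower set $L\df\{\sigma\in\Sigma\mid\sigma\seq C\}$; and because $L$ is a lower set, decomposing each of its simplices into the relative interiors of its faces (Lemma~\ref{l:interior}) yields $C=|L|=\bigcup_{\sigma\in L}\rint{\sigma}$. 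Setting $U\df\Sigma\setminus L\in\Up{\Sigma}$ and using the partition of $|\Sigma|$ once more, we get $\gammaup(U)=\bigcup_{\sigma\in U}\rint{\sigma}=|\Sigma|\setminus\bigcup_{\sigma\in L}\rint{\sigma}=|\Sigma|\setminus C=O$, so $O$ lies in the image.

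Combining the last two paragraphs, $\gammaup$ is a surjective order embedding, hence an order isomorphism, hence an isomorphism of the bounded lattices $\Up{\Sigma}$ and $\PCo{(\Sigma)}$; by the reduction of the first paragraph it is therefore an isomorphism of Heyting algebras. The main obstacle, such as it is, is purely bookkeeping: one must be careful that the relative interiors genuinely partition $|\Sigma|$ (so that disjointness serves both the injectivity argument and the complementation identity in the surjectivity step), and that $\PCo{(\Sigma)}$ really consists of the complements of subcomplex supports rather than of some larger lattice they merely generate. Alternatively, one could phrase the same content through finite Esakia duality (Lemma~\ref{l:esakia}), exhibiting $\Sigma$ as the face poset dual to $\PCo{(\Sigma)}$, but the direct verification above seems shortest.
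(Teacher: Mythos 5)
Your proposal is correct and follows essentially the same route as the paper: both reduce to showing a lattice isomorphism (the Heyting structure being order-determined on finite algebras), both rest entirely on Lemma~\ref{l:interior}'s partition of $|\Sigma|$ into relative interiors, and both prove surjectivity by complementing $O$ to a $\Sigma$-definable polyhedron, passing to the corresponding lower set of $\Sigma$, and taking its upper-set complement. The only (harmless) organisational difference is that you establish ``surjective order embedding'' and conclude, whereas the paper directly verifies preservation of unions and intersections and then argues surjectivity and injectivity separately.
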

\begin{proof} It suffices to show that $\gammaup$ is an isomorphism of distributive lattices. It is clear that $\gammaup$ preserves the top and bottom elements, and that it preserves unions: if $U,V\in \Up{\Sigma}$ then
\[
\gammaup(U\cup V)=\bigcup_{\sigma\in U\cup V}\rint{\sigma}=\left(\bigcup_{\sigma\in U }\rint{\sigma}\right)\cup\left(\bigcup_{\sigma\in  V}\rint{\sigma}\right)=\gammaup(U)\cup\gammaup(V).
\]
Concerning intersections,
\begin{align}
\gammaup(U)\cap\gammaup(V)&=\left(\bigcup_{\sigma\in U}\rint{\sigma}\right)\cap\left(\bigcup_{\tau\in V }\rint{\tau}\right)\nonumber\\
&=\bigcup_{\sigma\in U}\left(\rint{\sigma}\cap\bigcup_{\tau\in V }\rint{\tau}\right)\nonumber\\
&=\bigcup_{\sigma\in U}\bigcup_{\tau\in V}\left(\rint{\sigma}\cap\rint{\tau}\right)\nonumber\\
&=\bigcup_{\sigma\in U,\, \tau \in V}\left(\rint{\sigma}\cap\rint{\tau}\right)\label{eq:intersection}
\end{align}
By Lemma \ref{l:interior}, for any two $\sigma,\tau\in\Sigma$ the intersection $\rint{\sigma}\cap\rint{\tau}$ is empty as soon as $\sigma\neq\tau$. Hence from \eqref{eq:intersection} we deduce
\[
\gammaup(U)\cap\gammaup(V)=\bigcup_{\delta\in U\cap V}\rint{\delta}=\gammaup(U\cap V),
\]
as was to be shown.

To prove $\gammaup$ is surjective, let $O\in\PCo{(\Sigma)}$ and set $P\coloneqq|\Sigma|\setminus O\in\PCc{(\Sigma)}$. Then, by definition of $\PCc{(\Sigma)}$, there is exactly one subtriangulation $\Delta$ of $\Sigma$ such that $P=|\Delta|$, and $\Delta$ is a lower set of (the poset) $\Sigma$. Set
$U\coloneqq\Sigma\setminus \Delta$, so that $U$ is an upper set of $\Sigma$. We show:
\begin{align}\label{eq:surj}
O=\bigcup_{\sigma\in U}\rint{\sigma}.
\end{align}
To prove \eqref{eq:surj} we use the fact that, since $P$ is a member of $\PCc{(\Sigma)}$, for every $\sigma\in \Sigma$ we have 
\begin{align}\label{eq:surj2}
\rint{\sigma}\cap P\neq\emptyset \  \text{if, and only if,} \ \sigma\seq P.
\end{align}
Only the left-to-right implication in \eqref{eq:surj2} is non-trivial, and we prove the contrapositive. Assume $\sigma\not\seq P$. If $\sigma\cap P=\emptyset$ obviously $\rint{\sigma}\cap P=\emptyset$. Otherwise $\tau\coloneqq\sigma\cap P$ must be a \emph{proper} face of $\sigma$, and therefore $\rint{\sigma}\cap \tau=\emptyset$; hence $\rint{\sigma}\cap P=\emptyset$. This establishes  \eqref{eq:surj2}.

Now, to show \eqref{eq:surj},  if $x\in O$ then the carrier $\sigma^{x}\in\Sigma$ is such that $\rint{\sigma^{x}}\cap P=\emptyset$, so $\sigma^{x}\not\seq P$; equivalently,  $\sigma^{x}\not \in \Delta$. Then $\sigma^{x}\in U$ and hence $x \in \bigcup_{\sigma\in U}\rint{\sigma}$. Conversely, if $x \not \in O$, then $x \in P$, so $\rint{\sigma^{x}}\cap P \neq\emptyset$ and thus $\sigma^{x}\seq P$; equivalently,   $\sigma^{x}\in\Delta$ . Then  $\sigma^{x}\not\in U$ and hence $x\not\in \bigcup_{\sigma\in U}\rint{\sigma}$. This proves  \eqref{eq:surj}.

In light of \eqref{eq:surj} we now have $\gammaup(U)=O$, so that $\gammaup$ is surjective.

Finally, to prove injectivity, it suffices to recall that relative interiors of simplices in $\Sigma$ are pairwise-disjoint, so the union in \eqref{eq:gup} is in fact a disjoint one, which makes the injectivity of $\gammaup$ evident.
\end{proof}
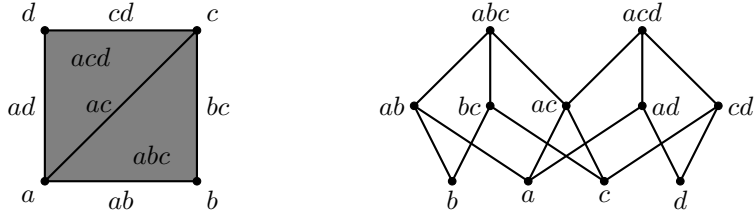
\begin{figure}
\begin{tikzpicture}
\path [fill, gray] (0,0) -- (2,0) -- (2,2) -- (0,0);
\path [fill, gray] (0,0) -- (0,2) -- (2,2) -- (0,0);
\draw [thick] (0,2) -- (0,0) -- (2,0) -- (2,2) -- (0,2);
\draw [thick] (0,0) -- (2,2);
\draw [fill] (0,0) circle [radius=0.05];
\draw [fill] (0,2) circle [radius=0.05];
\draw [fill] (2,0) circle [radius=0.05];
\draw [fill] (2,2) circle [radius=0.05];
\node [below left] at (0,0) {$a$};
\node [above left] at (0,2) {$d$};
\node [below right] at (2,0) {$b$};
\node [above right] at (2,2) {$c$};
\node [below] at (1,0) {$ab$};
\node [above] at (1,2) {$cd$};
\node [right] at (2,1) {$bc$};
\node [left] at (0,1) {$ad$};
\node [left] at (1,1) {$ac$};
\node [below] at (1.4,0.6) {$abc$};
\node [above] at (0.6,1.4) {$acd$};
\end{tikzpicture}
\hspace{1.5cm}
\begin{tikzpicture}
\draw [thick] (7,2) -- (6,1);
\draw [thick] (7,2) -- (7,1);
\draw [thick] (7,2) -- (8,1);
\draw [thick] (9,2) -- (8,1);
\draw [thick] (9,2) -- (9,1);
\draw [thick] (9,2) -- (10,1);
\draw [thick] (6.5,0) -- (6,1);
\draw [thick] (6.5,0) -- (7,1);
\draw [thick] (7.5,0) -- (6,1);
\draw [thick] (7.5,0) -- (8,1);
\draw [thick] (7.5,0) -- (9,1);
\draw [thick] (8.5,0) -- (7,1);
\draw [thick] (8.5,0) -- (8,1);
\draw [thick] (8.5,0) -- (10,1);
\draw [thick] (9.5,0) -- (9,1);
\draw [thick] (9.5,0) -- (10,1);
\draw [fill] (7,2) circle [radius=0.05];
\draw [fill] (9,2) circle [radius=0.05];
\draw [fill] (6,1) circle [radius=0.05];
\draw [fill] (7,1) circle [radius=0.05];
\draw [fill] (8,1) circle [radius=0.05];
\draw [fill] (9,1) circle [radius=0.05];
\draw [fill] (10,1) circle [radius=0.05];
\draw [fill] (6.5,0) circle [radius=0.05];
\draw [fill] (7.5,0) circle [radius=0.05];
\draw [fill] (8.5,0) circle [radius=0.05];
\draw [fill] (9.5,0) circle [radius=0.05];
\node [below] at (7.5,0) {$a$};
\node [below] at (9.5,0) {$d$};
\node [below] at (6.5,0) {$b$};
\node [below] at (8.5,0) {$c$};
\node [left] at (6,1) {$ab$};
\node [right] at (10,1) {$cd$};
\node [left] at (7,1) {$bc$};
\node [right] at (9,1) {$ad$};
\node [left] at (8,1) {$ac$};
\node [above] at (7,2) {$abc$};
\node [above] at (9,2) {$acd$};
\end{tikzpicture}
\protect{\caption{A triangulation $\Sigma$ of $[0,1]^{2}$ and the corresponding intuitionistic frame that is (isomorphic to) the Esakia-dual of the Heyting algebra $\PCo{(\Sigma)}$ of  $\Sigma$-definable open polyhedra. Cf.\ Example \ref{ex:square}.}\label{fig1}}
\end{figure}
\begin{ex}\label{ex:square}Consider the unit square $[0,1]^{2}$, and let    $\Sigma$ be its triangulation shown on the left of Fig.\ \ref{fig1}. The reader can verify  that the set $\Sigma$ ordered by inclusion --- whose Hasse diagram is depicted on the right in Fig.\ \ref{fig1} --- is isomorphic to the  Esakia-dual poset of the Heyting algebra $\PCo{(\Sigma)}$ of  $\Sigma$-definable open polyhedra.\qed
\end{ex}
\subsection{Topological dimension through bounded depth}\label{ss:tdbd}
We can now prove:
\begin{lem}\label{l:ji}Let $\Sigma$ be a  triangulation in $\R^{n}$. 
\begin{enumerate}[label={\rm (\arabic*)}]
\item\label{l:ji1} The join-irreducible elements of $\PCc{(\Sigma)}$ are the simplices of $\Sigma$.
\item\label{l:ji2} The join-irreducible elements of $\PCo{(\Sigma)}$ are the open stars of simplices of $\Sigma$.
\item\label{l:ji3} In both $\PCc{(\Sigma)}$ and $\PCo{(\Sigma)}$ there is a chain of prime filters having cardinality $\dim{\Sigma}+1$. In neither $\PCc{(\Sigma)}$ nor $\PCo{(\Sigma)}$  is there a chain of  prime filters having strictly larger cardinality.
\end{enumerate}
\end{lem}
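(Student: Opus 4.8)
The plan is to treat all three parts through the Birkhoff representation of the finite distributive lattices $\PCc{(\Sigma)}$ and $\PCo{(\Sigma)}$, using the standard fact that in a finite distributive lattice $L$ the map $j\mapsto\uparrow{}j$ is an order-reversing bijection from the poset of join-irreducible elements of $L$ onto $(\Spec{L},\seq)$; in particular a longest chain of prime filters and a longest chain of join-irreducibles have equal cardinality. Thus \ref{l:ji1} and \ref{l:ji2} amount to identifying the join-irreducibles, and \ref{l:ji3} will follow by computing a longest chain in the face poset $\Sigma$.

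For \ref{l:ji2} the essential work is already in place: by Lemma \ref{l:updown} the map $\gammaup$ is an isomorphism $\Up{\Sigma}\xrightarrow{\sim}\PCo{(\Sigma)}$, and the join-irreducibles of $\Up{\Sigma}$ are exactly the principal upper sets $\uparrow{}\sigma$, $\sigma\in\Sigma$ (each nonempty upper set being the union of the principal upper sets of its minimal elements, so those with two distinct minimal elements are reducible). Since we computed $\gammaup(\uparrow{}\sigma)=\ostar{\sigma}$, transporting along $\gammaup$ shows that the join-irreducibles of $\PCo{(\Sigma)}$ are precisely the open stars $\ostar{\sigma}$.

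For \ref{l:ji1} I would first establish the dual of Lemma \ref{l:updown}: the assignment $L\mapsto\bigcup L$ is a lattice isomorphism $\Lo{\Sigma}\xrightarrow{\sim}\PCc{(\Sigma)}$ with inverse $C\mapsto\{\sigma\in\Sigma\mid\sigma\seq C\}$. The only points needing care are that these maps are mutually inverse and order-preserving in both directions: surjectivity uses the description $\PCc{(\Sigma)}=\{\bigcup\Delta\mid\Delta\text{ a lower set of }\Sigma\}$ recorded after Definition \ref{d:hasigma}, while the identity $\{\sigma\mid\sigma\seq\bigcup L\}=L$ follows from the carrier argument of Lemma \ref{l:ri} (any $\sigma\seq\bigcup L$ has a relative-interior point whose carrier is $\sigma$, forcing $\sigma\in L$). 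A bijection between lattices preserving $\seq$ both ways is an order isomorphism and hence preserves meets and joins. The join-irreducibles of $\Lo{\Sigma}$ are the principal lower sets $\downarrow{}\sigma$, whose images $\bigcup\{\tau\mid\tau\face\sigma\}$ are exactly the closed simplices $\sigma$, giving \ref{l:ji1}. (Alternatively one argues directly, since $\rint{\sigma}$ meets no proper face: $\sigma=C\cup D$ forces $\sigma\seq C$ or $\sigma\seq D$, whereas any element with two distinct maximal simplices splits as a join of strictly smaller elements.)

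Finally, for \ref{l:ji3} I combine the above with the depth of the face poset. A chain $\sigma_0\faceneq\cdots\faceneq\sigma_k$ in $\Sigma$ has strictly increasing dimensions, so $k\leq\dim{\Sigma}$, and the chain of faces of any $\dim{\Sigma}$-simplex realises $k=\dim{\Sigma}$; hence $\depth{\Sigma}=\dim{\Sigma}$ and a longest chain in $\Sigma$ has $\dim{\Sigma}+1$ elements. By \ref{l:ji1}–\ref{l:ji2}, the join-irreducibles ordered by inclusion form posets isomorphic to $\Sigma$ for $\PCc{(\Sigma)}$ (closed simplices under $\face$) and to $\Sigma^{\rm op}$ for $\PCo{(\Sigma)}$ (open stars, since $\ostar{\sigma}\seq\ostar{\tau}\iff\tau\face\sigma$). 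As $\depth$ is invariant under order reversal, in each case a longest chain of join-irreducibles—equivalently of prime filters—has cardinality $\dim{\Sigma}+1$, and none is longer. The main obstacle here is bookkeeping rather than depth: keeping straight the isomorphism-versus-anti-isomorphism directions relating prime filters, join-irreducibles, and the face poset, together with the verification of the dual of Lemma \ref{l:updown} needed for \ref{l:ji1}; the remainder is a routine application of Birkhoff's representation and of the properties of $\gammaup$ already proved.
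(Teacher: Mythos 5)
Your proposal is correct and follows essentially the same route as the paper: part \ref{l:ji2} via Lemma \ref{l:updown} and the identification of join-irreducibles of $\Up{\Sigma}$ with principal upper sets, part \ref{l:ji1} by the (dual, lower-set) description of $\PCc{(\Sigma)}$, and part \ref{l:ji3} by the standard correspondence between prime filters and join-irreducibles in a finite distributive lattice together with the observation that dimensions strictly increase along a chain of faces, so that the longest chain has $\dim{\Sigma}+1$ elements. The only difference is presentational: the paper exhibits the explicit chain $x_0<x_0x_1<\cdots<x_0\cdots x_d$ and bounds chains of prime filters directly, whereas you route the count through $\depth{\Sigma}$ and the order-(anti-)isomorphisms, which is the same argument.
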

\begin{proof}Item \ref{l:ji1} follows from direct inspection of the definitions. Item \ref{l:ji2} is an immediate consequence of Lemma \ref{l:updown} along with Esakia duality (Subsection \ref{ss:esakia}). To prove \ref{l:ji3},
set $d\df\dim \Sigma$ and note that by definition $\Sigma$ contains at least one $d$-simplex $\sigma=x_{0}\cdots x_{d}\in\Sigma$. By item \ref{l:ji1} the chain of simplices
$x_0<x_0x_{1}<\cdots<x_{0}x_{1}\cdots x_{d}=\sigma$ is a chain of join-irreducible elements of $\PCc{(\Sigma)}$, and  the principal filters generated by these elements yields a chain of prime filters of $\PCc{(\Sigma)}$ of cardinality $d+1$. On the other hand, any chain of prime filters of $\PCc{(\Sigma)}$ must be finite because $\PCc{(\Sigma)}$ is. If $\mathfrak{p}_{1}\subset\mathfrak{p}_2\subset\cdots\subset\mathfrak{p}_{l}$ is any such chain of prime filters, then each $\mathfrak{p}_{i}$ is principal --- again because $\PCc{(\Sigma)}$ is finite ---  its unique generator $p_{i}$ is join-irreducible, and we have $p_{l}<p_{l-1}<\cdots <p_{2}<p_1$ in the order of the lattice $\PCc{(\Sigma)}$. Then $p_{i}\in\Sigma$, and clearly,  since the simplex $p_{1}$ has $l-1$  proper faces of distinct dimensions, $\dim{p_{1}}\geq l-1$. But  $d\geq \dim{p_{1}}$ by definition of $d\df\dim{\Sigma}$, and therefore $d+1\geq l$, as was to be shown. The proof for $\PCo{(\Sigma)}$ is analogous, using item \ref{l:ji2}.
\end{proof}
To finally relate the  bounded-depth formul\ae\ to topological dimension, we give a  proof of Theorem  \ref{t:dim}.
\begin{proof}[Proof of Theorem \ref{t:dim}](i) $\Rightarrow$ (ii) \ By Lemma  \ref{l:td}, $\dim{\Sigma}=d$ for any triangulation $\Sigma$ of $d$. By Lemmas \ref{l:bd}, \ref{l:triangheyt}, and \ref{l:ji}, the subalgebra $\PCo{(\Sigma)}$ of $\Subo{P}$ satisfies the equation $\BD_{d}=\top$, and fails each equation $\BD_{d'}=\top$ for each integer $0\leq d'<d$. To complete the proof it thus suffices to show that any finitely generated subalgebra of $\Subo{P}$ is a subalgebra of  $\PCo{(\Sigma)}$ for some triangulation $\Sigma$ of $P$. But this is precisely the content of the Triangulation Lemma \ref{l:tl}.

\smallskip \noindent (ii) $\Rightarrow$ (i) \ We prove the contrapositive. Suppose first $\dim{P}>d\geq 0$. Then, by (i) $\Rightarrow$ (ii), $\Subo{P}$ fails the equation $\BD_{d}$, so that (ii) does not hold. On the other hand, if $0\leq d'\df\dim{P}<d$, by (i) $\Rightarrow$ (ii) we know that $\Subo{P}$ satisfies the equation $\BD_{d'}=\top$, so again (ii) does not hold.
\end{proof}

\section{Nerves of posets, and the geometric finite model property}\label{s:nerves}
In this section we use a classical construction in polyhedral geometry to realise finite posets geometrically.  Our aim is to prove:
\begin{thm}\label{t:nerve}Let $A$ be a finite, nonempty poset of cardinality $n\in\N$. There exists a triangulation $\Sigma$ in $\R^{n}$ satisfying the following conditions.
\begin{enumerate}[label={\rm (\arabic*)}]
\item\label{t:nerve1} $\depth{A}=\dim{\Sigma}$.
\item\label{t:nerve2} There is a surjective p-morphism $\Sigma\twoheadrightarrow A$, where   $\Sigma$ is equipped with the inclusion order.
\end{enumerate}
\end{thm}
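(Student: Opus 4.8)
The plan is to realise $A$ geometrically via its \emph{order complex} (the nerve of the poset in Alexandrov's sense), embedded as a subcomplex of a single large simplex. Write $A=\{a_{1},\dots,a_{n}\}$ and let $e_{1},\dots,e_{n}$ be the standard basis of $\R^{n}$; these points are affinely independent, so $\conv\{e_{1},\dots,e_{n}\}$ is an $(n-1)$-simplex. For each nonempty chain $C\seq A$ let $\sigma_{C}$ denote the simplex spanned by the corresponding basis vectors, and set
\[
\Sigma\df\{\sigma_{C}\mid C\seq A\text{ is a nonempty chain}\}.
\]
First I would check that $\Sigma$ is a triangulation: it is closed under faces, since a face of $\sigma_{C}$ is spanned by a nonempty subset of $C$, which is again a chain; and the intersection condition holds automatically because all $\sigma_{C}$ are faces of the fixed simplex $\conv\{e_{1},\dots,e_{n}\}$, with $\sigma_{C}\cap\sigma_{D}=\sigma_{C\cap D}$ (empty when $C\cap D=\emptyset$), and $C\cap D$ is again a chain.

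Claim \ref{t:nerve1} is then immediate from the definitions: a $d$-simplex of $\Sigma$ is exactly a chain of $A$ with $d+1$ elements, so
\[
\dim\Sigma=\max\{\,|C|-1\mid C\seq A\text{ a chain}\,\}=\depth{A}.
\]

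For \ref{t:nerve2}, recall that under the inclusion (= face) order one has $\sigma_{C}\face\sigma_{D}$ iff $C\seq D$. I would define
\[
f\colon \Sigma\longrightarrow A,\qquad f(\sigma_{C})\df\max C,
\]
the maximum being taken in $A$ (it exists as $C$ is a finite chain). This $f$ is order-preserving, since $C\seq D$ gives $\max C\in D$ and hence $\max C\leq\max D$; and it is surjective, since the singleton chain $\{a\}$ maps to $a$. The p-morphism condition $f[\uparrow{}\sigma_{C}]=\uparrow{}f(\sigma_{C})$ is the heart of the matter. The inclusion $\seq$ follows from order-preservation. For $\supseteq$, given $a\in A$ with $\max C\leq a$, the set $D\df C\cup\{a\}$ is again a chain (every element of $C$ is $\leq\max C\leq a$), with $C\seq D$ and $\max D=a$; hence $a=f(\sigma_{D})\in f[\uparrow{}\sigma_{C}]$.

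The construction and the dimension count are routine; the only genuine choice is in \ref{t:nerve2}, where sending a chain to its \emph{maximum} (rather than its minimum, which would reverse the order and produce a p-morphism onto $A^{\mathrm{op}}$) is precisely what makes $f$ order-preserving and licenses the one-step lift $D=C\cup\{a\}$ witnessing the back condition. I expect no serious obstacle beyond fixing this direction of the correspondence correctly; everything else reduces to the elementary fact that subsets and intersections of chains are chains.
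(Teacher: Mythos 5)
Your proposal is correct and follows essentially the same route as the paper: realising the order complex (nerve) of $A$ on the standard basis vectors of $\R^{n}$, reading off the dimension from the longest chain, and defining the p-morphism by sending a chain to its maximum, with the back condition witnessed by the one-step extension $C\cup\{a\}$. The only difference is cosmetic --- you spell out the verification that $\Sigma$ is a triangulation, which the paper treats as immediate.
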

\begin{construction}
 The \emph{nerve} (\cite[{\textit{passim}}]{Alexandrov98}, \cite[p.\ 1844]{Bjorner95}) of a finite poset $A$ is the set
\[
\nerve{(A)}\df\left\{\emptyset\neq C\seq A \mid C \text{ is totally ordered by the restriction of $\leq$ to } C\times C \right\}.
\] 
In other words, the nerve of $A$ is the collection of all chains of $A$. We always regard the nerve $\nerve{(A)}$ as a poset under inclusion order.\footnote{In the literature on polyhedral geometry the nerve is most often regarded as an ``abstract simplicial complex'', or ``vertex scheme''. See e.g.\ \cite{Alexandrov98}. We do not need to explicitly use this notion in this paper.} Let us display the elements of $A$ as $\left\{a_{1},\ldots,a_{n}\right\}$. Let $e_{1},\ldots,e_{n}$ denote the vectors in the standard basis of the linear space $\R^{n}$. The \emph{triangulation induced by} the nerve $\nerve{(A)}$ is the set of simplices
\
\[
\geo{(\nerve{(A)})}\df\left\{\conv{\left\{e_{i_1},\ldots,e_{i_l}\right\}}\seq\R^{n}\mid \left\{a_{i_1},\ldots,a_{i_l}\right\}\in\nerve{(A)}\right\}.
\] 
Then it is immediate that $\geo{(\nerve{(A)})}$ indeed is a triangulation in $\R^{n}$, and its underlying polyhedron $|\geo{(\nerve{(A)})}|$ is called the \emph{geometric realisation of} the poset $A$. For the proof of Theorem \ref{t:nerve}, we set
\[
\Sigma\df \geo{(\nerve{(A)})}.
\]
Using the fact that simplices are uniquely determined by their vertices (see Subsection \ref{ss:polybasic}), we see that the map
\[
a_{i_1}<a_{i_{2}}<\cdots< a_{i_{l}}\in \nerve{(A)}\ \ \longmapsto \ \ \conv{\left\{e_{i_1},\ldots,e_{i_l}\right\}} \in \Sigma
\]
is an order-isomorphism between $\nerve{(A)}$ and $\Sigma$, the latter ordered by inclusion.
Therefore,
\[
\dim{\Sigma}=\text{cardinality of the longest chain in $A$}=\depth{A},
\]
so that \ref{t:nerve1} holds.  To prove Theorem \ref{t:nerve} it will therefore suffice to construct a p-morphism $\nerve{(A)}\twoheadrightarrow A$. To this end, let us  define a function 
\begin{align*}
f\colon \nerve{(A)}&\longrightarrow A\\
C\in \nerve{(A)}&\longmapsto \max{C}\in A,
\end{align*}
where the maximum is computed in the poset $A$.
\qed
\end{construction}
\begin{proof}[Proof of Theorem \ref{t:nerve}]To show that $f$ preserves order, just note that $C\seq D \in \nerve{(A)}$ obviously entails $\max{C}\leq\max{D}$ in $A$. To show that $f$ is a p-morphism, for each $C\in \nerve{(A)}$ we prove:
\begin{align}\label{eq:tbp}
f[\uparrow{}C]=\left\{a_{k}\in A \mid a_{k}\geq \max{C}\right\}\eqqcolon\,\uparrow{}\max{C}\eqqcolon\,\uparrow{}f(C).
\end{align}
Only the first equality in \eqref{eq:tbp} needs proof, and only the   right-to-left inclusion is non-trivial. So let $a_{k}\in A$ be such that $a_{k}\geq \max{C}$. Then the set $D\df C\cup\{a_{k}\}$ is a chain in $A$, i.e.\ a member of $\nerve{(A)}$, and $D\in\, \uparrow{}C$ because $C\seq D$. Further, 
$\max{D}=a_{k}$, because $a_{k}\geq \max{C}$, so that $f(D)=a_{k}$. Hence $a_{k}\in f[\uparrow{}C]$, and the proof is complete.
\end{proof}
\begin{rem}\label{l:universal}The reader may be interested in comparing the construction above of the Heyting  algebra  $\Up{\Sigma}$ from the finite distributive lattice $\Up{A}$ with the description of the  prelinear Heyting algebra\footnote{A Heyting algebra is \emph{prelinear} if it satisfies the  law $(x\to y)\vee (y\to x)=\top$.}  freely generated by a finite distributive lattice in \cite{AGM}, along with that of the Heyting algebra freely generated by a finite distributive lattice in \cite{Ghilardi92} (see also \cite{GZ}). It is an interesting open question whether the construction given here using the nerve is the solution to a universal problem, too.\qed
\end{rem}
\section{Proof of Theorem}\label{s:main}
\begin{proof}[Proof of Theorem] By Theorem \ref{t:dim}, $\Log{\Polyd}$ is contained in  intuitionistic logic extended by the axiom schema $\BD_{d}$. Conversely, suppose a formula $\alpha$ is not contained in intuitionistic logic extended by the axiom schema $\BD_{d}$. By Lemmas \ref{l:birkhoff}, \ref{l:fmp} and \ref{l:bd}, there exists a finite poset $A$ satisfying $\depth{A}\leq d$ such that  there is an evaluation into the frame $A$ that provides a counter-model to $\alpha$; equivalently, the equation $\alpha=\top$ fails in the Heyting algebra $\Up{A}$.  By Theorem \ref{t:nerve} there exists a triangulation $\Sigma$ in $\R^{|A|}$ such that $\depth{A}=\dim{\Sigma}\leq d$, along with  a surjective p-morphism 
\begin{align}\label{eq:pmor}
p\colon\Sigma\longsurj A.
\end{align} We set $P\coloneqq |\Sigma|$ and consider the Heyting algebra 
$\Subo{P}$ and its subalgebra $\PCo{(\Sigma)}$, per Corollary \ref{c:heytstruct} and Lemma \ref{l:triangheyt}, respectively. Since $\dim{P}\leq d$, we have $\Log{P}\supseteq \Log{\Polyd}$ by Theorem \ref{t:dim}.

By Lemma \ref{l:updown} there is an isomorphism of (finite) Heyting algebras
\[
\gammaup\colon \Up{\Sigma}\longrightarrow \PCo{(\Sigma)}
\]
defined as in \eqref{eq:gup}. By finite Esakia duality (Lemma \ref{l:esakia}) we have  isomorphisms of posets
\[
\Sigma\cong\Spec{\Up{\Sigma}}\cong\Spec{\PCo{(\Sigma)}}.
\]
The  Esakia dual $\Spec{p}\colon \Up{A}\hookrightarrow \Up{\Sigma}$ of the surjective p-morphism \eqref{eq:pmor} is an injective homomorphism. We thus have homomorphisms
\[
\Up{A}\xhookrightarrow{\,\Spec{p}\,} \Up{\Sigma}\overset{\gammaup}\cong \PCo{(\Sigma)}\seq\Subo{P},
\]
where the  inclusion preserves the Heyting structure  by Lemma   \ref{l:triangheyt}. Since the equation $\alpha=\top$ fails in  $\Up{A}$, it also fails in the larger algebra $\Subo{P}$; equivalently, $\alpha\not \in \Log{P}\supseteq \Log{\Polyd}$, and the proof of the first statement is complete. The second statement follows easily from the first using Lemma \ref{l:fmp}.
\end{proof}
\begin{rem}\label{r:manifolds}Intuitionistic logic is capable of expressing properties of polyhedra other than their  dimension. To show  this, let $\P$ consist of the class of all polyhedra that are, as topological spaces, closed (=without boundary) topological manifolds. Then $\Log{\P}$ contains intuitionistic logic properly. Indeed, it is a classical theorem that for any triangulation $\Sigma$ of any $d$-dimensional manifold $M\in\P$, each  $(d-1)$-simplex $\sigma\in\Sigma$ is a face of exactly two $d$-simplices of $\Sigma$. It follows from our results above that  the class $\P$ satisfies the well-known \emph{bounded top-width axiom schema} of   index $2$, cf.\ \cite[p.\ 112]{CZ97}, which is refuted by intuitionistic logic. The problem of determining which superintuitionistic logics are definable by classes of polyhedra is open; e.g.,  what is the logic of the class $\P$ of all closed triangulable manifolds?\qed
\end{rem}

\section*{Acknowledgements}
The first-named author was  partially supported by Shota Rustaveli National Science Foundation grant \#DI-2016-25. The remaining authors were partially supported by the Italian FIRB "Futuro in Ricerca" grant \#RBFR10DGUA.

\providecommand{\bysame}{\leavevmode\hbox to3em{\hrulefill}\thinspace}
\providecommand{\href}[2]{#2}

\end{document}